\numberwithin{equation}{section}
\newtheorem{thm}[equation]{Theorem}
\newtheorem{prop}[equation]{Proposition}
\newtheorem{lemma}[equation]{Lemma}
\newtheorem{cor}[equation]{Corollary}
\theoremstyle{definition}
\newtheorem{rem}[equation]{Remark}
\newtheorem{example}[equation]{Example}
\newtheorem{dfn}[equation]{Definition}
\newcommand{\SB}{\mathop{\mathrm{SB}}}
\renewcommand{\Im}{\mathop{\mathrm{Im}}}
\newcommand{\ind}{\mathop{\mathrm{ind}}}
\newcommand{\CH}{\mathop{\mathrm{CH}}\nolimits}
\newcommand{\GL}{\operatorname{\mathrm{GL}}}
\newcommand{\SL}{\operatorname{\mathrm{SL}}}
\newcommand{\Ch}{\mathop{\mathrm{Ch}}\nolimits}
\newcommand{\Z}{\mathbb{Z}}
\newcommand{\Spec}{\operatorname{Spec}}
\newcommand{\End}{\operatorname{End}}
\newcommand{\Ker}{\operatorname{Ker}}
\renewcommand{\phi}{\varphi}
\title
[Integral motives and Krull-Schmidt]
{Integral motives, relative Krull-Schmidt principle, and Maranda-type theorems}
\keywords
{Linear algebraic groups, twisted flag varieties, generalized Severi-Brauer varieties,
Chow groups and motives, Maranda theorem.}
\author
[N. Semenov]
{Nikita Semenov}
\author
[M. Zhykhovich]
{Maksim Zhykhovich}
\address{Semenov:
Institut f\"ur Mathematik, Johannes Gutenberg-Universit\"at
Mainz, Staudingerweg 9, D-55128, Mainz, Germany}
\email{semenov@uni-mainz.de}
\address{Zhykhovich:
Institut f\"ur Mathematik, Johannes Gutenberg-Universit\"at
Mainz, Stau\-din\-ger\-weg 9, D-55128, Mainz, Germany}
\email{zhykhovi@uni-mainz.de}
\thanks{The authors gratefully acknowledge the support of the Sonderforschungsbereich/Transregio 45 ``Periods,
moduli spaces, and arithmetic of algebraic varieties'' (Bonn-Essen-Mainz).}
\date
{\today}
\begin{document}

\begin{abstract}
In the present article we investigate properties of the category of the integral Grothendieck-Chow motives over a field.

We discuss the Krull-Schmidt principle for integral motives, provide a complete
list of the generalized Severi-Brauer varieties with indecomposable integral motive, and
exploit a relation between the category of motives of twisted flag varieties and integral $p$-adic
representations of finite groups.
\end{abstract}

\maketitle


\section{Introduction}
In the present article we investigate properties of the category of the {\it integral} Gro\-then\-dieck-Chow motives over a field.

A short overview of recent results proved using motivic methods is given e.g. in the introduction of \cite{shells}.

A wide literature is devoted to the problem of finding of integral motivic decompositions via lifting decompositions
given modulo some integers. For example,
Haution and Vishik study liftings of motivic decompositions of smooth projective quadrics from
$\Z/2\Z$-coefficients to $\Z$-coefficients (\cite{Hau}, \cite{vish-lens}). De Clercq studies liftings of
motivic decompositions of twisted flag varieties
from $\Z/p\Z$-coefficients to $\mathbb{F}_{p^n}$-coefficients in \cite{Chls} ($p$ is a prime number). Vishik and Yagita prove some lifting results
in \cite[Section~2]{ViYa}. Finally, Petrov, Semenov, and Zainoulline provide a lifting criterion from $\Z/m\Z$-coefficients to $\Z$-coefficients
for twisted flag varieties of inner type in \cite[Thm.~2.16]{PSZ} for an integer $m$.

Another direction of research involves the Krull-Schmidt principle in the category of Chow motives.
In \cite{Ch-M} Chernousov and Merkurjev proved the Krull-Schmidt
principle for the motives of twisted flag varieties satisfying certain conditions. In particular, they proved that the motive with
$\Z_{(p)}$- or $\Z_p$-coefficients ($p$ is a prime number) of any twisted flag variety of a simple group uniquely decomposes into a direct sum
of indecomposable motives (up to isomorphism and permutation of the factors).

Vishik proved in \cite{vish-lens} that the Krull-Schmidt principle holds for the integral motives of projective
quadrics. However, counterexamples \cite[Example 9.4]{Ch-M} and \cite[Corollary 2.7]{CPSZ} provide
projective homogeneous varieties for which the integral complete motivic decomposition is
not unique.

The following question was raised by experts: To what extent does the Krull-Schmidt principle fail for the integral motives of
twisted flag varieties? For example, is it true that two complete motivic decompositions of a twisted
flag variety become the same over an algebraic closure of the base field?
If this holds, we say that the motivic decompositions are relatively equivalent (see Definition~\ref{defkrull} below)
and if any two complete decompositions of our variety are relatively equivalent, we say that
the relative Krull-Schmidt principle holds.

Note that the motivic decompositions in the counterexamples mentioned above are relatively equivalent and therefore
do not provide counterexamples to the relative Krull-Schmidt principle.

Section~\ref{seckrull} of the present article is devoted to this problem, where we
provide examples and counterexamples. Our proofs rely on our Theorem~\ref{theorem1} proved in Section~\ref{sec3},
which can be interpreted as a classification of all integral motivic decompositions
modulo relative equivalence in terms of reductions modulo primes. The class of varieties, which we consider, consists
of smooth projective varieties possessing a finite Galois splitting field and for which the Rost nilpotence principle
holds. In particular, this includes the twisted flag varieties.

We remark that we do not see, how to prove our Theorem~\ref{theorem1}
referring to the existing literature and combining arguments there. The most close article is \cite{PSZ} and
Theorem~2.16 there, but unfortunately, its proof contains parts (e.g. the proof of the key Proposition~2.15),
which are not plausible to us (including to one of the authors of \cite{PSZ}). Moreover, \cite[Thm.~2.16]{PSZ} contains the condition that
the motivic decomposition under consideration should be {\it $\Z/m\Z$-free} (in the terminology of \cite{PSZ}), and this condition
is not easy to check in practice (cf. Remark~\ref{rem1}).
In the present article we provide a self-contained alternative approach.

The last section of the article is devoted to integral motivic decompositions of the generalized Severi-Brauer
varieties. This problem has a long history, starting probably in 1995,
when Karpenko proved that the integral motive of the classical Severi-Brauer variety of
a division algebra is indecomposable (see \cite{K-i}).
After that many works (let us mention here
\cite{upper}, \cite{CPSZ}, \cite{Chls}, \cite{Zh}, and \cite{Zh2}) were dedicated
to the study of motivic decompositions of the generalized Severi-Brauer varieties.
In particular, Karpenko proved that the integral motive of $\SB_2(A)$ is indecomposable,
if $A$ is a division algebra with $2$-primary index.

However, till this moment there was no complete answer to the question, when
the integral motive of a generalized Severi-Brauer variety is indecomposable.
In the last section of this article we show that the above examples give the exact list of Severi-Brauer
varieties with this property.

Section~\ref{secmaranda} is devoted to the motives with coefficients
in a discrete valuation ring. We formulate all results in this section over the localization $\Z_{(p)}$ of $\Z$
at a prime $p$ or over the $p$-adic integers $\Z_p$. On the other hand, the most results
of this section hold over any discrete valuation ring (sometimes complete). To avoid too technical
exposition we decided to restrict ourselves to these two rings.

The interest on motives with $\Z_{(p)}$- or $\Z_p$-coefficients stems in part from the proof of the
Bloch-Kato conjecture by Rost and Voevodsky, where the fact that the ring of coefficients is flat
over $\Z$ is essential.

Finally, we exploit a relation between the motives of twisted flag varieties and
the category of representations on lattices over d.v.r. This relation allows us to use results from
the representation theory to prove motivic results, as well as use motivic results to give geometric
proofs for results in the representation theory. We illustrate this principle in
Prop.~\ref{maranda}, and Prop.~\ref{conlon} (Maranda and Conlon theorems). We remarks that the relation
between motives and representations appears already in \cite{Ch-M}.

\medskip

{\bf Acknowledgements.}
We would like to thank sincerely Skip Garibaldi and Esther Beneish
for numerous e-mail conversations on the subject of the paper and for sharing with us their knowledge of algebraic
groups and of integral representations of finite groups.

\section{Category of Chow motives}

Let $F$ be a field.
In the present article we work in the category of the Grothendieck-Chow motives over $F$ with coefficients
in a commutative unital ring $\Lambda$ as defined in \cite{EKM}.

If $\Lambda=\Z$, then we speak about {\it integral motives}.
For a motive $M$ over $F$ and a field extension $E/F$ we denote by $M_E$ the extension of scalars.

We denote the Tate motive with twist $n$ by $\Lambda(n)$. A motive $M$ is called
{\it split} (resp. {\it geometrically split}), if it is isomorphic to a finite direct sum of Tate motives
(resp. if $M_E$ is split over some field extension $E/F$).

For a smooth projective variety $X$ over $F$ we denote by $M(X)$ the motive of $X$ in the category
of Chow motives. Every motive $M$ is determined by a smooth projective variety $X$ and a projector
$\pi\in\CH(X\times X)$, where $\CH$ stands for the Chow ring of $X$ modulo rational equivalence.
A motivic decomposition of a motive $M$ is a decomposition into a direct sum. A motivic decomposition
$M=\bigoplus M_i$ is called {\it complete}, if all motives $M_i$ are indecomposable.

Motivic decompositions of a motive $M=(X,\pi)$ correspond to decompositions of the projector $\pi$
into a sum of (pairwise) orthogonal projectors. For a field extension $E/F$ and a variety $Y$ we
call a cycle $\rho\in\CH(Y_E)$ {\it rational}, if it is defined over $F$, i.e. lies in the image of the
restriction homomorphism $\CH(Y)\to\CH(Y_E)$. We say that $\rho$ is $F$-rational, if we want to stress $F$.

We say that the Rost nilpotence principle holds for $X$, if the kernel of the restriction homomorphism
$\End(M(X))\to\End(M(X_E))$ consists of nilpotent elements for all field extensions $E/F$.
At the present moment the Rost nilpotence
principle is proven for motives of twisted flag varieties \cite{CGM}, for motives of varieties in dimensions less
than $3$ (\cite{Gi10} and \cite{Gi12}) and for generically split motives (see \cite{ViZa} and cf. \cite{Br03}),
and it is expected to be valid for all smooth projective varieties.

Let now $G$ be a semisimple linear algebraic group over $F$ (see \cite{Borel}, \cite{Springer}, \cite{Inv}).
A {\it twisted flag variety} of $G$ is the variety of parabolic subgroups of $G$ of some fixed type.
If $G$ is of {\it inner type}, we associate with $G$ a set of prime numbers, called {\it torsion primes}. Namely,
we define this set as the union of all torsion primes of all simple components of $G$,
and for a simple $G$ of inner type this set consists of the prime divisors of $n+1$, if $G$ is of type $\mathrm{A}_n$,
equals $\{2,3\}$ for types $\mathrm{F}_4$, $\mathrm{E}_6$, $\mathrm{E}_7$, $\{2,3,5\}$ for type $\mathrm{E}_8$
and $\{2\}$ in all other cases.

For a semisimple algebraic group $G$ we use the notion ``split'' in a different sense. Namely, $G$ is split,
if it contains a maximal torus which is isomorphic over $F$ to a product of the multiplicative groups $\mathbb{G}_m$,
and we call such a torus {\it split}.
If $G$ is split, then the motive of every twisted flag variety under $G$ with coefficients in any ring is a sum
of Tate motives. It is well-known that any torus has a finite Galois splitting field. In particular, any semisimple group
and any twisted flag variety has a finite Galois splitting field.

Finally, by $[1,n]$ we denote the set of natural numbers $i$ such that $1\le i\le n$.

\section{Integral motives of nilsplit varieties}\label{sec3}
Let $F$ be a field and $M$ be an integral geometrically split motive over $F$.
Over a splitting field extension  $E$ of $F$ the motive $M$ becomes isomorphic
to a finite sum of Tate motives. Let $k$ be a positive integer and let
\begin{equation}\label{eqqq}
M_E = \widetilde{M_1} \oplus \ldots \oplus \widetilde{M_k}
\end{equation}
be a motivic decomposition over $E$ (not necessary complete, so each $\widetilde{M_i}$ is isomorphic to a certain finite sum of Tate motives).

Let now $\Lambda$ be a commutative unital ring.
We say that the above decomposition of $M_E$ is $\Lambda$-{\it admissible}, if there exists
a motivic decomposition over $F$ with $\Lambda$-coefficients
$$M \otimes \Lambda = M_1 \oplus \ldots \oplus M_k \,$$
such that $(M_i)_E \simeq \widetilde{M_i} \otimes \Lambda$ for all $1 \leq i \leq k$.
We say simply $m$-{\it admissible}, if $\Lambda = \Z/m\Z$.

We call a direct summand of $M_E$ $p$-admissible for a prime $p$, if it is a direct summand in a $p$-admissible
decomposition of $M_E$.

\begin{example}
Let $p$ be a prime such that $M\otimes\Z/p\Z$ is a direct sum of Tate motives over $F$.
Then every decomposition~\ref{eqqq} of $M_E$ is $p$-admissible.
In particular, this is the case, if $M$ is a direct summand of the motive of a twisted flag variety of inner type
and $p$ is not a torsion prime.
\end{example}

\begin{dfn}
A smooth projective variety $X$ over a field $F$ is called a {\it nilsplit} variety, if there exists a finite Galois
splitting field extension $E$ of $X$ and the Rost nilpotence principle holds for $X$. We say that a nilsplit
variety is {\it of inner type}, if the Galois group of $E/F$ acts trivially on the Chow group of $X$.
\end{dfn}

\begin{thm}\label{theorem1}
Let $X$ be a nilsplit variety of inner type, $E/F$ be a finite Galois splitting field extension of $X$ and
$M$ be a direct summand of the integral motive of $X$ over $F$.

Then a motivic decomposition of $M_E$ is $\Z$-admissible if and only if it is
$p$-admissible for every prime divisor $p$ of $[E:F]$.
\end{thm}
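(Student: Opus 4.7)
The forward direction is immediate: given a $\Z$-admissible decomposition $M=\bigoplus M_i$ over $F$ with $(M_i)_E\simeq \widetilde{M_i}$, tensoring with $\Z/p\Z$ yields a $p$-admissible decomposition for every prime $p$. So the content lies in the converse. Set $n=[E:F]$ and write $A=\End M$, $A_E=\End M_E$, and $R\subseteq A_E$ for the image of the restriction $A\to A_E$. Because $X$ is of inner type, the Galois group $G=\Gal(E/F)$ acts trivially on $A_E$, so for any $\alpha\in A_E$ the trace $\sum_{g\in G}g\alpha = n\alpha$ lies in $R$. Hence $nA_E\subseteq R$, and the cokernel $A_E/R$ is an $n$-torsion abelian group that decomposes into its $p$-primary parts for $p\mid n$. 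By the Rost nilpotence hypothesis, $\ker(A\to R)$ is a nil ideal; since idempotents (and orthogonal systems thereof) lift modulo nil ideals, producing a $\Z$-admissible decomposition is reduced to producing a system of orthogonal rational idempotents $e_1,\dots,e_k\in R$ summing to $\pi_E$ and conjugate inside $A_E$ to the given $\widetilde{\pi_i}$.

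Next, for each prime $p\mid n$ the $p$-admissibility provides orthogonal idempotents $e_i^{(p)}\in A/pA$ summing to $\pi\bmod p$, whose images in $A_E/pA_E$ are conjugate to $\widetilde{\pi_i}\bmod p$. The tower $A/p^{k+1}A \twoheadrightarrow A/p^kA$ has square-zero kernel at each step, so standard lifting of orthogonal idempotents along nil extensions allows one to promote the $e_i^{(p)}$ to orthogonal idempotents in $A/p^kA$ for every $k\geq 1$; applied with $k=v_p(n)$, and combined across primes via the Chinese Remainder decomposition $A/nA\cong \prod_{p\mid n}A/p^{v_p(n)}A$, this produces orthogonal idempotents $\bar e_i\in A/nA$ summing to $\pi\bmod n$, whose images in $A_E/nA_E$ are conjugate to $\widetilde{\pi_i}\bmod n$ by a unit $\bar u$.

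The main obstacle is the final ascent from the mod-$n$ data to an integral, rational, conjugate system in $A_E$. Here the inner-type condition $nA_E\subseteq R$ is decisive: choose any lift $u\in A_E$ of $\bar u$. Then for each $i$ the element $u\widetilde{\pi_i}u^{-1}$ (once $u$ is corrected to a genuine unit, which is possible because the defect of being a unit lies in the square-zero piece after passing to a further power of $n$) differs from a representative of $\bar e_i$ by an element of $nA_E\subseteq R$. Iterating — with Hensel-type corrections controlled by the vanishing of the $p$-primary obstructions in $A_E/R$ given by the hypothesis, exactly once for each $p\mid n$ since $A_E/R$ is $n$-torsion — yields orthogonal idempotents $e_i\in R$ summing to $\pi_E$ and conjugate to $\widetilde{\pi_i}$ in $A_E$. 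Lifting this system back to $A$ via the nilpotent kernel then delivers the desired $\Z$-admissible decomposition.

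The principal difficulty, and the place where the inner-type assumption enters non-trivially, is this last assembly step: lifting idempotents modulo $n$ is not automatic, and only works because the obstruction sits in the $n$-torsion group $A_E/R$ whose $p$-primary pieces are killed precisely by the hypotheses. This also explains the precise statement: for primes $p\nmid n$, one has $R+pA_E=A_E$ because $n$ is invertible mod $p$, so $p$-admissibility is automatic and need not be assumed.
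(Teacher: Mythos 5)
Your Steps 1 and 2 (lifting from $\Z/p\Z$ to $\Z/p^{\alpha}\Z$ along square-zero extensions, then assembling via the Chinese Remainder Theorem) follow the paper's Steps 1 and 2 in spirit, and your final descent via Rost nilpotence matches Step 4. You also correctly spot the decisive role of the inner-type hypothesis: the transfer argument gives $mA_E\subseteq R$, and the paper uses exactly this at the end of its Step 3 to conclude rationality of the integral lift.

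However, there is a genuine gap in your argument for the passage from $\Z/m\Z$ to $\Z$. You try to lift the conjugating unit $\bar u\in(A_E/mA_E)^{\times}$ to a unit $u\in A_E^{\times}$, asserting that ``the defect of being a unit lies in the square-zero piece after passing to a further power of $m$.'' That is not true: $A_E$ is a direct product of matrix rings $M_{n_d}(\Z)$ (one per codimension in the split motive), the kernel of $A_E\twoheadrightarrow A_E/m^kA_E$ is $m^kA_E$, which is never nilpotent, and the map $\GL_{n_d}(\Z)\to\GL_{n_d}(\Z/m\Z)$ is not surjective because of the determinant obstruction ($\det$ of an integral invertible matrix is $\pm 1$, while $(\Z/m\Z)^{\times}$ is larger). ``Hensel-type corrections'' require completeness, which $\Z$ lacks, so the iteration you describe has no reason to terminate in $A_E$. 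Moreover, you conflate two distinct obstructions: the obstruction to lifting an idempotent from $A_E/mA_E$ to $A_E$ (which has nothing to do with $A_E/R$) and the obstruction to the lift being rational (which is what lands in $A_E/R$ and is killed by the transfer argument).

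The paper overcomes exactly this difficulty with a different, more explicit mechanism. Sorting by codimension reduces to the case $(X_E,\pi_E)\simeq\Z^{\oplus n}(d)$. Writing the mod-$m$ projectors as sums of rank-one products $a_i\times b_i$, one forms coordinate matrices $A,B\in\GL_n(\Z/m\Z)$ with $AB=\mathrm{Id}$, normalizes the determinant by replacing $a_1$ by $(\det A)\cdot a_1$ and $b_1$ by $(\det A)^{-1}\cdot b_1$ (which leaves $\rho_1$ unchanged), and then uses the surjectivity of $\SL_n(\Z)\to\SL_n(\Z/m\Z)$ (elementary matrices generate $\SL_n$ over the semilocal ring $\Z/m\Z$) to lift to $\SL_n(\Z)$ and hence produce genuine orthogonal projectors in $A_E$. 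Rationality is then a consequence of $mA_E\subseteq R$, since these integral projectors are congruent mod $m$ to the rational $\rho_j$. None of this machinery appears in your sketch, and without it — or some substitute — the passage from $\Z/m\Z$ to $\Z$ does not go through.
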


\begin{proof}
Let $p_i$, $1\leq i\leq l$, denote all distinct prime divisors of $m:=[E:F]$.
The $\Z$-admissibility of a decomposition of $M_E$ clearly implies its $p_i$-admissibility for every $1 \leq i \leq l$.
So we prove another direction of the statement.

By assumptions, $m = \prod _{i=1}^l p_i^{\alpha_i}$ for some $\alpha_i\in\mathbb{N}$.

Let $M = (X, \pi)$, where $\pi$ is a projector in $\End M(X)$.
\noindent Let
\begin{equation}
\label{int-dec}
M_E = \widetilde{M_1} \oplus \ldots \oplus \widetilde{M_k}
\end{equation}
\noindent be a motivic decomposition which is $p_i$-admissible for every $1 \leq i \leq l$.
By definition, for every $i\in [1,l]$ we have the following decomposition
\begin{equation}
\label{proj-dec}
\pi_i = \pi_{i1} + \ldots + \pi_{ik} \, ,
\end{equation}
where $\pi_i$ is the reduction of $\pi_E$ modulo $p_i$ and
$\pi_{ij}$, $1 \leq j \leq k$, are rational over $F$ and orthogonal projectors
in $\End(M_E\otimes \Z/ p_i\Z )$ such that $(X_E, \pi_{ij})\simeq \widetilde{M_j} \otimes \Z/p_i\Z$.

The proof consists of 4 steps. We will construct certain projectors with different coefficient rings
($\Z/p_i\Z$, $\Z/p_i^{\alpha_i}\Z$, $\Z/m\Z$ and finally $\Z$).
We need to check that the constructed projectors are rational, orthogonal and the motivic decomposition~\ref{int-dec}
is admissible on each step.

{\bf Step 1.} (From $\Z/p\Z$ to $\Z/p^\alpha\Z$).
We need the following lemma.

\begin{lemma} \label{lemma} Let $f\colon A \twoheadrightarrow B$ be an epimorphism of finite rings. Let $x \in A$ and $y \in B$ be two projectors such that $f(x)=y$. Then a decomposition of $y$ into a sum of orthogonal projectors in $B$ lifts to a decomposition of $x$ into a sum of orthogonal projectors in $A$.
\end{lemma}
\begin{proof}
By induction argument it is enough to consider the case $y = y_1+y_2$, where
$y_1$ and $y_2$ are orthogonal projectors in $B$. Consider a surjective
homomorphism of finite rings $xAx \twoheadrightarrow yBy$ induced by $f$.
Note that $y_1 \in yBy$, so we can lift $y_1$ to some $\widetilde x_1 \in  xAx$.
By the Fitting Lemma
(here we use that $xAx$ is a finite ring), an appropriate power of
$\widetilde x_1$, which we denote by $x_1$, is a projector.  Finally we take $x_2 = x - x_1$. Clearly the projectors $x_1$ and $x_2$ are orthogonal and the sum $x=x_1+x_2$ is a lifting of the sum $y=y_1+y_2$.
\end{proof}

For every $i\in [1,l]$ we apply the above lemma for $A$ and $B$ being
resp. the subrings of $F$-rational cycles in $\End(M_E \otimes \Z/ p^{\alpha_i}_i\Z )$ and in $\End(M_E\otimes \Z/ p_i\Z )$.
It follows that the decomposition \ref{proj-dec} lifts to a decomposition
$$\pi'_i = \pi'_{i1} + \ldots + \pi'_{ik} \,$$ in the ring $\End(M_E \otimes \Z/p^{\alpha_i}_i\Z)$,
where $\pi'_{ij}$, $ 1 \leq j \leq k $, are rational orthogonal projectors and $\pi'_i$ is the reduction of $\pi_E$
modulo $p^{\alpha_i}_i$. Note that, since $\Z/p^{\alpha_i}_i\Z$ is a local ring, the Krull-Schmidt theorem
holds and, thus, for all $i$, $j$ the motive  $(X_E, \pi'_{ij})$ is a sum of Tate motives. Since $(X_E, \pi_{ij})$ is
the reduction of $(X_E, \pi'_{ij})$ modulo $p$ and $(X_E, \pi_{ij}) \simeq \widetilde{M_j}\otimes \Z/p_i\Z $,
we have $(X_E, \pi'_{ij}) \simeq \widetilde{M_j}\otimes \Z/p^{\alpha_i}_i\Z $ for all $i$, $j$.

{\bf Step 2.} (From $\Z/p^\alpha\Z$ to $\Z/m\Z$ --- Chinese Remainder Theorem).

By $\rho$ we denote the reduction of $\pi_E$ modulo $m$. We will now construct a decomposition
$$\rho = \rho_1 + \ldots + \rho_k$$ into a sum of rational orthogonal projectors such that $(X_E, \rho_j) \simeq \widetilde{M_j}\otimes \Z/m\Z$.

Let us fix $j \in [1, k]$. Since for every $i \in [1,l]$, the motive $(X_E, \pi'_{ij})$ is a sum of Tate motives, we can write for some $r_i$
$$\pi'_{ij} = \sum _{u=1}^{r_i} c_{iu}\times d_{iu} \, ,$$
where $c_{iu}$ and $d_{iu}$ are homogeneous elements in $\CH(X_E)\otimes \Z/p^{\alpha_i}_i\Z$ with
$\deg (c_{iu} \cdot d_{iu'}) = \delta_{uu'}$ for all $u$, $u'$, and $\delta$ denotes the Kronecker delta.

Since for every $i \in [1, l]$ we have $(X_E, \pi'_{ij}) \simeq \widetilde{M_j}\otimes \Z/p^{\alpha_i}_i\Z$
and since our decomposition is $p_i^{\alpha_i}$-admissible,
the rank $r_i$ does not depend on $i$ (and we denote this number by $r$) and
for every $i$ we can choose $c_{iu}$ and $d_{iu}$ such that their codimensions in $\CH(X_E)\otimes \Z/p^{\alpha_i}_i\Z$ do not depend on $i$.

It follows from the Chinese Remainder Theorem that for every $u \in [1,r]$ there exist homogeneous elements
$c_u, \, d_u \in \CH(X_E)\otimes \Z/m\Z$ such that $c_u \equiv c_{iu} \mod p^{\alpha_i}_i$
and $d_u \equiv d_{iu} \mod p^{\alpha_i}_i$ for
every $i \in [1, l]$. Clearly $$\rho_j = c_1\times d_1 + \ldots + c_r \times d_r$$ is a rational projector in
$\End(M_E \otimes \Z/ m\Z )$ such that $(X_E, \rho_j) \simeq \widetilde{M_j}\otimes \Z/m\Z $. In this way we construct
$\rho_j$ for every $1 \leq j \leq k$. Note that these projectors are orthogonal, because their reductions modulo $p^{\alpha_i}_{i}$, $i \in [1  , l]$, are orthogonal.

{\bf Step 3.} (From $\Z/m\Z$ to $\Z$).
Let us denote the graded rings $\CH^*(X_E)$ and $\CH^*(X_E) \otimes \Z/m\Z$ resp. by $V^*$ and $V^*_m$.
Let us lift the decomposition $$\rho = \rho_1 + \ldots + \rho_k$$ to a decomposition of $\pi_E \in \End M(X_E)$ into
a sum of orthogonal projectors. Sorting by codimensions, we can clearly reduce this problem to the case when
$(X_E, \pi_E) \simeq \Z^{\oplus n}(d)$ for some $d, n \in \Z^{\ge 0}$.

We have $\pi_E= \sum_{i=1}^n e_i\times e_i^*$, where $ e_i\in V^d, e^*_i \in V^{\dim X -d}$, $i \in [1,n]$, are
cycles such that $\deg e_i e^*_j = \delta_{ij}$ for all $ 1 \leq i,j \leq n$. For every $j \in [1,k]$, the motive $(X_E, \rho_j)$
is a sum of Tate motives $\Z/m\Z(d)$, so we can decompose $\rho_j$ as a sum of orthogonal projectors of the form
$a\times b$, where $ a\in V_m^d$, $b \in V_m^{\dim X -d}$.
Therefore $\rho = \sum^n_{i=1} a_i\times b_i$, where $\rho_j = \sum_{i \in I_j} a_i\times b_i$ and
$I_1 \sqcup \ldots \sqcup I_k = [1,n]$.

Let us construct now two matrices $A$ and $B$ in $\GL_n(\Z/ m\Z)$. The rows of $A$ (resp. the columns in $B$) are
the coordinates of $a_i$ (resp. of $b_i$), $ 1 \leq i \leq n$, in the basis
$(\bar e_1, \ldots, \bar e_n)$ (resp. $(\bar e^*_1, \ldots, \bar e^*_n)$), where by $x \mapsto \bar{x}$ we mean
the reduction modulo $m$. Since $\deg(a_ib_j) = \delta_{ij}$ for all $1 \leq i,j \leq n$, we have $AB = \mathrm{Id}_n$.
 Replacing $a_1$ by $(\det A) \cdot a_1$ and  $b_1$ by $(\det A)^{-1} \cdot b_1$ we can assume that $A$ and $B$ are in
$\SL_n(\Z/ m\Z)$.

Since $\Z/m\Z$ is a commutative semilocal ring, the group generated by elementary matrices $E_n(\Z/m\Z)$ coincides
with $\SL_n(\Z/m\Z)$. Thus, the reduction homomorphism $\SL_n(\Z)\to\SL_n(\Z/m\Z)$ is surjective and we can lift our
matrices $A$ and $B$ to some matrices $\widetilde{A}$ and $\widetilde{B}$ in $\SL_n(\Z)$
such that $\widetilde A\widetilde B=\mathrm{Id}_n$.

Let $\widetilde{a_i}$ (resp. $ \widetilde{b_i}$), $i \in [1, n]$, be the elements
in $V$ such that their coordinates in the basis $(e_1, \ldots, e_n)$ (resp.
$(e^*_1, \ldots, e^*_n)$) are the $i$-th row of $\widetilde{A}$ (resp. the $i$-th column of $\widetilde{B}$). For every $j \in [1,k]$ we define $\widetilde{\rho_j} = \sum_{i \in I_j} \widetilde{a_i}\times \widetilde{b_i}$, clearly $\widetilde{\rho_j}$ are orthogonal projectors.

By the construction and since $\widetilde B^t\widetilde A^t=\mathrm{Id}_n$,
we have $$\widetilde{\rho_1} + \ldots + \widetilde{\rho_k}=\sum_{i=1}^n e_i\times e_i^*=\pi_E.$$

Since $m$ is the degree of a Galois splitting field of $X$ and $X$ is of inner type,
by transfer argument for any $x \in V \otimes V$ the cycle $ m \cdot x$ is defined over $F$. Thus, the projectors $\widetilde{\rho_j}$, $1 \leq j \leq k$, are $F$-rational.

{\bf Step 4.} (From $E$ to $F$).
Since $X$ satisfies the Rost Nilpotence principle, applying \cite[Ch.~3, Prop.~2.10]{Bass}
to the restriction homomorphism $\End(M)\to\End(M_E)$, we can lift orthogonal
projectors $\widetilde\rho_j$ to orthogonal projectors $\pi_j$, $ 1 \leq j \leq k-1$,
with $(\pi_j)_E = \widetilde{\rho_j}$. Define $\pi_k=\pi-\sum_{j=1}^{k-1}\pi_j$.
Then $\pi_k$ is a projector orthogonal to all $\pi_j$, $1\le j\le k-1$,
$$\pi = \pi_1 + \ldots + \pi_k,$$ and
for every $j \in [1,k]$ we have $(X_E, (\pi_j)_E) \simeq \widetilde{M_j}$.
Thus, the decomposition \ref{int-dec} is $\Z$-admissible.
\end{proof}

\begin{rem}\label{rem1}
Let $p$ and $q$ be two different prime numbers.
Given two motivic decomposition with $\Z/p\Z$- and $\Z/q\Z$-coefficients resp., we can always lift them
to one decomposition with $\Z/pq\Z$-coefficients. On the other hand, not every motivic decomposition with
$\Z/pq\Z$-coefficients can be lifted to a decomposition with $\Z$-coefficients. E.g., the projector
of the form $a\times b+\alpha\cdot c\times d$ with $\deg(ab)=\deg(cd)=1$,
$\deg(ad)=\deg(cb)=0$, and $\alpha$ satisfying two equations $\alpha=0\mod p$ and $\alpha=1\mod q$
cannot be lifted to $\Z$.
\end{rem}

\begin{cor}\label{indecomposable}
In the notation of Theorem~\ref{theorem1} an integral motive $M$ is indecomposable if and only if there exists no non-trivial decomposition of $M_E$ admissible for any
torsion prime $p_i$.
\end{cor}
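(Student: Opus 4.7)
The plan is to deduce the corollary directly from Theorem~\ref{theorem1} after reformulating indecomposability of $M$ in terms of $\Z$-admissibility of decompositions of $M_E$. The corollary is not really a new result; it is almost a tautology once one unwinds the definitions, with Theorem~\ref{theorem1} supplying the bridge between $\Z$-admissibility and the $p_i$-admissibility condition.

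First I would establish the following reformulation: the integral motive $M$ is decomposable over $F$ if and only if there exists a non-trivial $\Z$-admissible decomposition of $M_E$. The direction ``decomposable $\Rightarrow$ non-trivial $\Z$-admissible decomposition'' is immediate: given $M=M_1\oplus M_2$ with both summands non-zero, the decomposition $M_E=(M_1)_E\oplus(M_2)_E$ is $\Z$-admissible by the very definition of admissibility, and it is non-trivial because neither $(M_i)_E$ can be zero nor equal to $M_E$ (the restriction map $\End(M)\to\End(M_E)$ sends non-zero projectors to non-zero projectors modulo nilpotents via Rost nilpotence, or more directly: if $(M_i)_E=0$ then $M_i=0$, and if $(M_i)_E=M_E$ then $M_i=M$). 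Conversely, any non-trivial $\Z$-admissible decomposition of $M_E$ is by definition the extension of scalars of a decomposition of $M$ over $F$, which is then non-trivial for the same reason.

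Next I would invoke Theorem~\ref{theorem1} applied to $M$ (which, as a direct summand of the motive of the nilsplit variety of inner type $X$, falls under its hypotheses): a decomposition of $M_E$ is $\Z$-admissible if and only if it is $p_i$-admissible for every prime divisor $p_i$ of $[E:F]$. Combining with the previous step gives: $M$ is decomposable if and only if there exists a non-trivial decomposition of $M_E$ which is simultaneously $p_i$-admissible for every such $p_i$. Contrapositive yields the corollary.

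The only subtle point, and essentially the only thing to check, is the bookkeeping of non-triviality through the definitions and the identification of the primes in the corollary's statement with those appearing in Theorem~\ref{theorem1}; no further motivic or group-theoretic input is required. In particular, there is no real obstacle beyond carefully quoting Theorem~\ref{theorem1}.
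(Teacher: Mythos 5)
Your argument is correct and is essentially the argument the paper leaves implicit (the corollary is stated without proof): translate indecomposability of $M$ into the non-existence of a non-trivial $\Z$-admissible decomposition of $M_E$, using Rost nilpotence to ensure that nonzero projectors over $F$ restrict to nonzero projectors over $E$, and then apply Theorem~\ref{theorem1} to replace $\Z$-admissibility by simultaneous $p_i$-admissibility. The only terminological point worth flagging, which you do, is that the ``torsion primes'' in the corollary are exactly the prime divisors $p_i$ of $[E:F]$ appearing in Theorem~\ref{theorem1} (as the subsequent remark in the paper confirms for twisted flag varieties).
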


\begin{rem} As in Remark~\ref{rem1} let $p$ and $q$ be two different prime numbers. Then the Tate motive
with $\Z/pq\Z$-coefficients is decomposable. Namely, for $a$, $b$ with $\deg(a)=1$ one can write
$a\times b=\alpha\cdot a\times b+(1-\alpha)\cdot a\times b$, where $\alpha$ satisfies $\alpha^2=\alpha\mod pq$
and $\alpha\ne 0,1$.

The phenomena of this remark and of remark~\ref{rem1} are related to the fact that over semilocal rings there exist
finitely generated projective modules which are not free.
\end{rem}

\begin{rem}
Let $X$ be a twisted flag variety of inner type. Then by \cite[Section~8]{CGM} $X$ is a nilsplit variety of inner
type. Moreover, in the statement of Thm.~\ref{theorem1} it suffices to consider the torsion primes of $X$ only.
\end{rem}

\section{Maranda-type theorems}\label{secmaranda}

Let $p$ be a prime, $\Z_{(p)}$ be the localization of $\Z$ at $p$, and $\Z_p$ denote the ring of $p$-adic integers.
We begin with general lemmas.

\begin{lemma}\label{padiclemma}
Let $Z$ be a smooth projective variety over a field $F$ and $E$ be a finite Galois field extension of $F$ such that
$\Ch(Z_E):=\CH(Z_E)\otimes \Z_p$ is a free $\Z_p$-module of finite rank. Then the subgroup of $F$-rational cycles is closed in $\Ch(Z_E)$ with respect to the $p$-adic topology.
\end{lemma}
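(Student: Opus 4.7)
The plan is to identify the subgroup of $F$-rational cycles as a $\Z_p$-submodule of the free $\Z_p$-module $\Ch(Z_E)$ and then apply the structure theorem for finitely generated modules over the principal ideal domain $\Z_p$.

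First I would observe that a cycle in $\Ch(Z_E)$ is $F$-rational precisely when it lies in the image of the $\Z_p$-linear map $\Ch(Z):=\CH(Z)\otimes\Z_p \to \Ch(Z_E)$ induced by the restriction homomorphism; in particular, the subgroup $H$ of $F$-rational cycles is automatically a $\Z_p$-submodule of $\Ch(Z_E)\cong \Z_p^r$, and not just an additive subgroup. This is the crucial structural point: a mere $\Z$-subgroup of $\Z_p^r$ need not be closed in the $p$-adic topology (for instance, $\Z$ is dense in $\Z_p$), so the lemma would be false without this observation.

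Next I would use that $\Z_p$ is a Noetherian PID. Hence $H$ is itself a finitely generated $\Z_p$-submodule of $\Z_p^r$, and the elementary divisors theorem (Smith normal form) produces a $\Z_p$-basis $e_1,\ldots,e_r$ of $\Ch(Z_E)$ together with integers $s \leq r$ and $a_1,\ldots,a_s \geq 0$ such that
\[
H \;=\; p^{a_1}\Z_p\, e_1 \,\oplus\, \cdots \,\oplus\, p^{a_s}\Z_p\, e_s.
\]

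Finally, in the coordinate decomposition $\Ch(Z_E) = \bigoplus_{i=1}^r \Z_p e_i$ the $p$-adic topology on $\Ch(Z_E)$ agrees with the product topology, and $H$ is cut out by the closed conditions that the $i$-th coordinate lies in $p^{a_i}\Z_p$ for $i \leq s$ and vanishes for $i > s$. Each such condition is closed in $\Z_p$, and continuous preimages of closed sets are closed, so $H$ itself is closed. I do not expect any real obstacle beyond the initial identification of $H$ as a $\Z_p$-submodule; once that is in hand, the conclusion is purely formal from the structure theory of $\Z_p$-modules.
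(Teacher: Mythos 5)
Your proof is correct, but it takes a genuinely different route from the paper's. The paper's argument is Galois-theoretic: it takes a convergent sequence of rational cycles $(x_n)$ with limit $x$, uses continuity of the Galois action to deduce $x$ is $G$-invariant, writes $x=p^l y+x_r$ where $p^l$ is the $p$-part of $[E:F]$, notes torsion-freeness forces $y$ to be $G$-invariant, and then uses the transfer to conclude $[E:F]\cdot y$ (hence $p^l y$, after dividing by the prime-to-$p$ part, which is a unit in $\Z_p$) is rational, so $x$ is rational. You instead bypass the Galois structure entirely: you observe that the $F$-rational cycles are the image of the $\Z_p$-linear restriction map $\Ch(Z)\to\Ch(Z_E)$ and hence form a $\Z_p$-submodule $H$ of $\Z_p^r$, invoke Noetherianness of $\Z_p$ to get $H$ finitely generated, and conclude by the elementary divisors theorem that $H=\bigoplus_i p^{a_i}\Z_p e_i$ in a suitable basis, which is visibly closed in the product topology. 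Your observation that $H$ is a $\Z_p$-submodule (not merely a $\Z$-subgroup) is exactly the point: the paper also uses this implicitly in the step where ``$my$ rational implies $p^ly$ rational'' requires dividing by a $\Z_p$-unit. A pleasant feature of your argument is that it does not use the Galois hypothesis on $E/F$ at all; only freeness of $\Ch(Z_E)$ and the $\Z_p$-linearity of restriction are needed. One could shorten your last step even further by remarking that any $\Z_p$-submodule of $\Z_p^r$ is the continuous image of a compact $\Z_p^s$ and hence compact, hence closed.
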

\begin{proof}
Let $(x_{n})_{n \geq 1 }$, $x_{n} \in \Ch(Z_E)$, be a converging sequence of rational cycles with the limit $x \in \Ch(Z_E)$. Let us proof that $x$ is rational. Denote by $G$ the Galois group of $E/F$.

Since the action of $G$ on $\Ch(Z_E)$ is continuous in the $p$-adic topology and $x_n$,
being rational, is $G$-invariant for every $n \geq 1$, the cycle $x$ is $G$-invariant as well.

Let $p^l$ be the maximal power of $p$ dividing $m :=|G|$. Since $(x_{n})_{n \geq 1 }$ converges to $x$, there exists a positive integer $r$ such that
$$x=p^l y + x_r$$
for some $y \in \Ch(Z_E)$. Since by assumptions $\Ch(Z_E)$ is torsion free, $y$ is $G$-invariant.
By transfer argument, $m y$ is rational and, hence, $p^l y$ is rational. Therefore $x$ is $F$-rational.
\end{proof}

The following lemma is \cite[Prop.~7]{Hau}. We remark that Haution formulated this lemma for quadrics,
but the same proof without any change works for any smooth projective variety possessing a finite Galois
splitting field.

\begin{lemma}\label{haution}
Let $R$ denote $\Z_{(p)}$ or $\Z_p$.
Let $X$ be a smooth projective variety over a field $F$ possessing a finite Galois splitting field $E/F$ of degree $m$, and $p^l$
be the largest power of $p$ dividing $m$. A cycle
$\alpha\in\CH(X_E)\otimes R$  is rational if and only if it is invariant under the Galois group of $E/F$ and its reduction
modulo $p^l$ is rational.
\end{lemma}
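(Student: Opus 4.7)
The ``only if'' direction is immediate. For the converse, set $G := \Gal(E/F)$ and write $m = p^l k$ with $\gcd(k, p) = 1$, so that $k \in R^{\times}$. Two ingredients are at hand. First, the transfer argument: for any $G$-invariant $\alpha$, the cycle $\sum_{g \in G} g\alpha = m\alpha$ is the image of $\tr_{E/F}(\alpha) \in \CH(X) \otimes R$ under restriction to $\CH(X_E)\otimes R$, hence $F$-rational; multiplying by the unit $k^{-1}$ then shows that $p^l \alpha$ is $F$-rational. Second, the assumption on the reduction modulo $p^l$ furnishes a cycle $\beta \in \CH(X) \otimes R$ and some $\gamma \in \CH(X_E) \otimes R$ with $\alpha - \beta_E = p^l \gamma$.

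Since $\beta_E$ is rational, it suffices to prove that $\alpha - \beta_E = p^l \gamma$ is itself $F$-rational. The plan is to Galois-average $\gamma$: set
\begin{equation*}
\tilde\gamma \;:=\; k^{-1}\sum_{g \in G} g\gamma \;\in\; \CH(X_E) \otimes R,
\end{equation*}
which is $F$-rational because $\sum_{g \in G} g\gamma$ is the restriction of $\tr_{E/F}(\gamma)$. Using $G$-invariance of $\alpha - \beta_E$, a direct computation gives
\begin{equation*}
p^l \tilde\gamma \;=\; k^{-1}\sum_{g \in G} g(p^l \gamma) \;=\; k^{-1}\cdot m(\alpha - \beta_E) \;=\; p^l(\alpha - \beta_E) \;=\; p^{2l}\gamma,
\end{equation*}
so that $p^l(\tilde\gamma - p^l\gamma) = 0$ in $\CH(X_E) \otimes R$. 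In the torsion-free setting relevant to the applications (as in the hypotheses of Lemma~\ref{padiclemma}, where $\CH(X_E) \otimes \Z_p$ is free of finite rank), one cancels $p^l$ to conclude $\tilde\gamma = p^l \gamma$; since $\tilde\gamma$ is rational, so is $\alpha - \beta_E$, and hence so is $\alpha$.

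The main obstacle is exactly this last cancellation of $p^l$: if $\CH(X_E) \otimes R$ carries $p$-torsion, it is not valid to pass from $p^l \tilde\gamma = p^{l}(p^l\gamma)$ to $\tilde\gamma = p^l \gamma$ directly. I would resolve this either by invoking Lemma~\ref{padiclemma} and iteratively improving the approximation $\beta$ of $\alpha$ in the $p$-adic topology (producing a Cauchy sequence of rational cycles converging to $\alpha$ inside the closed subgroup of rational cycles), or by a Galois-cohomological bookkeeping argument using that the obstruction to lifting a $G$-invariant cycle to a rational one takes values in a group annihilated by $p^l$, which is killed once one knows the obstruction vanishes modulo $p^l$.
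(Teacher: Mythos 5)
Your argument is essentially correct, and it is the natural transfer argument: in fact it coincides, after streamlining, with the argument the paper itself spells out in the proof of Lemma~\ref{padiclemma} (the $p$-adic closure statement). For Lemma~\ref{haution} the paper gives no proof of its own but cites Haution's Proposition~7 (for quadrics), remarking that the proof carries over verbatim; your sketch is exactly that proof.

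The only real defect is your closing paragraph. The torsion issue you flag is not a gap in the proof, because the hypotheses already exclude it: $E$ is assumed to be a \emph{splitting} field of $X$, which in this paper means $M(X_E)$ is a finite direct sum of Tate motives, so $\CH(X_E)$ is a finitely generated free abelian group and hence $\CH(X_E)\otimes R$ is a free $R$-module. The cancellation from $p^l\tilde\gamma = p^{2l}\gamma$ to $\tilde\gamma = p^l\gamma$ is therefore legitimate unconditionally, not merely ``in the applications.'' A slightly cleaner way to close: since $\alpha-\beta_E = p^l\gamma$ is $G$-invariant, one has $p^l(g\gamma-\gamma)=0$ for all $g\in G$, so freeness forces $\gamma$ itself to be $G$-invariant, and the transfer applied directly to $\gamma$ gives that $p^l\gamma = k^{-1}m\gamma$ is rational. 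Your two fallback strategies would not in any case sidestep a genuine torsion problem: Lemma~\ref{padiclemma} already assumes $\CH(Z_E)\otimes\Z_p$ free (so invoking it presupposes the freeness you were trying to avoid, and the Cauchy-sequence idea relies on completeness, hence works only for $R=\Z_p$, not $R=\Z_{(p)}$); and the proposed Galois-cohomological bookkeeping implicitly requires an injectivity of the reduction map on obstruction groups that would itself hinge on the same torsion-freeness input. Fortunately none of this is needed.
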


\begin{thm}\label{padicprop}
Let $X$ be a nilsplit variety over $F$, $p$ be a prime number, $E/F$ be a finite Galois splitting field extension
of the motive of $X$ with $\Z_p$-coefficients and let $M$ be a direct summand of this motive.

Then a motivic decomposition of $M_E$ is $\Z_p$-admissible if and only if it is $p$-admissible.
\end{thm}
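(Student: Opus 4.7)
The direction that $\Z_p$-admissibility implies $p$-admissibility is immediate by reducing modulo $p$, so all the content is in the converse. My plan is to transpose the strategy of Theorem~\ref{theorem1}, replacing the Chinese Remainder step by a $p$-adic limit and using Lemma~\ref{padiclemma} in place of a single transfer argument. The proof splits into three phases: (a) lift the given $p$-admissible decomposition compatibly through all truncations $\Z/p^n\Z$; (b) take a $p$-adic limit to obtain $\Z_p$-projectors; (c) descend from $E$ to $F$ via Rost nilpotence.

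For phase (a), I would argue by induction on $n$. Write $A_n$ for the subring of $F$-rational cycles in $\End(M_E\otimes \Z/p^n\Z)$. The reduction $A_{n+1}\twoheadrightarrow A_n$ is surjective, since any rational cycle mod $p^n$ lifts to a rational cycle mod $p^{n+1}$ via the same integral $F$-representative, and both rings are finite because $\CH(X_E)\otimes \Z_p$ is a finitely generated $\Z_p$-module. Starting from the given decomposition mod $p$ and repeatedly applying Lemma~\ref{lemma}, I obtain a compatible tower of orthogonal decompositions of $F$-rational projectors
$$\pi_E\bmod p^n \;=\; \pi_{n1}+\cdots+\pi_{nk}.$$
Krull--Schmidt over the local ring $\Z/p^n\Z$ then forces $(X_E,\pi_{nj})\simeq \widetilde{M_j}\otimes \Z/p^n\Z$ at every level, exactly as in Step~1 of Theorem~\ref{theorem1}.

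For phase (b), $\End(M_E)\otimes \Z_p$ is finitely generated over $\Z_p$, hence $p$-adically complete, so the compatible sequences $(\pi_{nj})_n$ converge to $\widetilde{\pi_j}\in \End(M_E)\otimes \Z_p$. Orthogonality and the relation $\widetilde{\pi_1}+\cdots+\widetilde{\pi_k}=\pi_E$ pass to the limit, and each $\widetilde{\pi_j}$, being the $p$-adic limit of $F$-rational cycles, is itself $F$-rational by Lemma~\ref{padiclemma}. Krull--Schmidt over the local ring $\Z_p$ then upgrades the mod-$p$ identification to $(X_E,\widetilde{\pi_j})\simeq \widetilde{M_j}\otimes \Z_p$. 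Phase (c) is a direct application of Rost nilpotence, which is preserved under the flat base change $\Z\to \Z_p$, combined with \cite[Ch.~3, Prop.~2.10]{Bass}, to lift the $\widetilde{\pi_j}$ to orthogonal projectors in $\End(M)\otimes \Z_p$ (taking the last one as $\pi$ minus the sum of the previous ones, as in Step~4 of Theorem~\ref{theorem1}).

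The hard part is phase (b): verifying that rationality is preserved under the $p$-adic limit. This is precisely what Lemma~\ref{padiclemma} accomplishes; its proof uses continuity of the $\Gal(E/F)$-action to transport invariance to the limit and then a transfer argument to recover rationality of the truncation error. It is worth noting that no inner-type assumption appears here, because Lemma~\ref{padiclemma} only requires torsion-freeness of $\Ch(X_E)$ and Galois transfer, both of which are available under the weaker hypothesis that $E$ splits the $\Z_p$-motive of $X$. Apart from this topological closure step, every ingredient is a direct transcription of the corresponding piece of the proof of Theorem~\ref{theorem1}.
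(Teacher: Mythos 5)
Your proposal reproduces the paper's argument: Lemma~\ref{lemma} applied inductively to lift through the truncations $\Z/p^n\Z$, a $p$-adic limit whose rationality is secured by Lemma~\ref{padiclemma}, and descent from $E$ to $F$ via Rost nilpotence exactly as in Step~4 of Theorem~\ref{theorem1}. The only spot worth tightening is the limit step: the $\pi_{nj}$ live in the finite quotients $\End(M_E\otimes\Z/p^n\Z)$, so to invoke Lemma~\ref{padiclemma} one should, as the paper does, first choose $F$-rational $\Z_p$-representatives $\rho_{nj}\in\End(M_E)$ of each $\pi_{nj}$ and pass to the limit of that Cauchy sequence of rational cycles.
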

\begin{proof}
Let $M = (X, \pi)$, where $\pi$ is a projector in $\End (M(X)\otimes \Z_p)$. For any positive integer $n$ we denote
by $\pi_{E,n}$ the image of $\pi_{E}$ in $\End (M(X_E)\otimes \Z/p^n\Z)$.
Let
\begin{equation}
\label{int-dec2}
M_E = \widetilde{M_1} \oplus \ldots \oplus \widetilde{M_k}
\end{equation}
be a $p$-admissible motivic decomposition of $M_E$.
By definition of the $p$-admissibility, we have a decomposition
\begin{equation}
\label{proj-dec2}
\pi_{E,1} = \pi_{11} + \ldots + \pi_{k1} \, ,
\end{equation}
where $\pi_{i1}$, $1 \leq i \leq k$, are $F$-rational orthogonal projectors
in $\End(M_E\otimes \Z/ p\Z )$ such that $(X_E, \pi_{i1})\simeq \widetilde{M_i} \otimes \Z/p\Z$.

Now we will construct a sequence of decompositions $(P_n)_{n \geq 1 }$

$$ (P_n):  \pi_{E,n} = \pi_{1n} + \ldots + \pi_{kn} \, ,  $$
such that the $n$-th decomposition $(P_n)$ is a decomposition of $\pi_{E,n}$ into a sum of $k$ orthogonal $F$-rational
projectors and for any $n \geq 1$ the decomposition $(P_{n+1})$ is a lifting of $(P_{n})$.

We define $(P_1)$ as the decomposition \ref{proj-dec2}. Using induction on $n$, if $(P_n)$ is constructed, we apply
Lemma~\ref{lemma} and proceeding exactly as in Step 1 of the proof of Theorem~\ref{theorem1} we construct $(P_{n+1})$.

For every $i\in [1,k]$ we define now a sequence $(\rho_{in})_{n \geq 1 }$ of elements in $\End(M_E)$ as follows.
Namely, for $\rho_{in}$ we take an arbitrary $F$-rational $p$-adic representative of $\pi_{in}$ in $\End(M_E)$.
Since $(P_{n+1})$ is a lifting of $(P_{n})$, we have $\rho_{i,n+1} \equiv \rho_{i,n} \mod p^n$ for all positive integers $n$.
Therefore for every $i\in [1,k]$ the sequence $(\rho_{in})_{n \geq 1 }$ converges in the $p$-adic topology to some element $\rho_{i} \in \End(M_E)$. By construction $\rho_i$, $1 \leq i \leq k$, are orthogonal projectors and
\begin{equation}
\label{p-adic-dec}
\pi_E = \rho_{1} + \ldots + \rho_{k} \, .
\end{equation}
By Lemma~\ref{padiclemma} the projectors $\rho_i$, $1 \leq i \leq k$, are rational over $F$.

To finish the proof we lift decomposition \ref{p-adic-dec} (proceeding exactly as in Step 4 of the proof of Theorem~\ref{theorem1}) to a decomposition of $\pi$ into a sum of orthogonal projectors
$$\pi = \pi_{1} + \ldots + \pi_{k} \, . $$
Finally,
for every $i \in [1,k]$ we have $(X_E, (\pi_i)_E) \simeq \widetilde{M_i}$.
Thus, the decomposition \ref{int-dec2} is $\Z_p$-admissible.
\end{proof}

Let now $R$ be a d.v.r. or $R = \Z$. We write $\Ch$ for the Chow group with coefficients in $R$.
Let $X$ and $Y$ be smooth projective varieties such that the $R$-motives of $X$ and $Y$ are split and write $V$ for the $R$-module $\Ch(X)$.
Consider the bilinear form  $$ \mathfrak{b}\colon V \times V \rightarrow R, \quad \mathfrak{b}(x,y)=\mathrm{deg}(x \cdot y)\, .$$

For a correspondence $\alpha\in\Ch(X\times Y)$ we denote by $\alpha_*\colon\Ch(X)\to\Ch(Y)$ its realization (see \cite[\S 62]{EKM}).
By definition $\alpha_*(x)=(\mathrm{pr}_Y)_*(\alpha\cdot (x\times 1))$, $x\in\Ch(X)$, where $(\mathrm{pr}_Y)_*$
is the pushforward of the projection $X\times Y\to Y$.

The following statements are well-known.

\begin{prop}\label{corr-end}
In the above notation for a correspondence $\alpha\in\Ch(X\times X)$
the assignment $\alpha \mapsto \alpha_*$ defines an isomorphism between $R$-algebras $\Ch(X \times X)$ and $\End_{R}(V)$.
\end{prop}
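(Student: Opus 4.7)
The plan is a direct matrix computation exploiting the fact that once the $R$-motive of $X$ is split one has at one's disposal both the Künneth formula and Poincaré duality.

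Since the $R$-motive of $X$ is split, $V=\Ch(X)$ is a free $R$-module of finite rank, say $n$, admitting a homogeneous basis $e_1,\ldots,e_n$. Splitness also forces the bilinear form $\mathfrak{b}$ to be non-degenerate (Poincaré duality), so there is a dual basis $e_1^{*},\ldots,e_n^{*}$ of $V$ satisfying $\deg(e_i^{*}\cdot e_j)=\delta_{ij}$. Moreover $M(X\times X)=M(X)\otimes M(X)$ is split as well, whence the external product induces an $R$-module isomorphism
\[
\Ch(X)\otimes_R\Ch(X)\;\xrightarrow{\sim}\;\Ch(X\times X).
\]
Consequently every correspondence $\alpha\in\Ch(X\times X)$ has a unique presentation $\alpha=\sum_{i,j}a_{ij}\,e_i^{*}\times e_j$ with $a_{ij}\in R$.

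Unwinding the definition of realization and using the projection formula, one obtains the elementary identity $(u\times v)_{*}(x)=\deg(u\cdot x)\cdot v$ for all $u,v,x\in V$. Applied termwise to the presentation above,
\[
\alpha_{*}(e_k)\;=\;\sum_{i,j}a_{ij}\,\deg(e_i^{*}\cdot e_k)\,e_j\;=\;\sum_{j}a_{kj}\,e_j,
\]
so the matrix of $\alpha_{*}$ in the basis $\{e_1,\ldots,e_n\}$ is precisely the transpose of $(a_{ij})$. Hence $\alpha\mapsto\alpha_{*}$ identifies the two free $R$-modules $\Ch(X\times X)$ and $\End_R(V)$ of rank $n^2$ basis-for-basis, and in particular is an $R$-linear bijection.

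It remains to upgrade this $R$-module isomorphism to an $R$-algebra isomorphism, i.e.\ to verify $(\beta\circ\alpha)_{*}=\beta_{*}\circ\alpha_{*}$ for the composition of correspondences. This is a formal consequence of the projection formula applied to the three projections from $X\times X\times X$ onto the pairwise factors, and is exactly the content recorded in \cite[\S 62]{EKM}; no further work is required. Thus the only real input is the splitness of $M(X)$ over $R$, which supplies both Künneth and Poincaré duality; given these the proposition is purely formal, and I do not anticipate any genuine obstacle.
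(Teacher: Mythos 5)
Your proof is correct. The paper actually gives no argument here --- it labels this proposition ``well-known'' and moves on --- so there is nothing to compare against; your Künneth-plus-Poincaré-duality argument, writing $\alpha=\sum a_{ij}\,e_i^{*}\times e_j$ and checking that $\alpha\mapsto\alpha_*$ takes this basis of $\Ch(X\times X)$ bijectively to the standard matrix-unit basis of $\End_R(V)$, is exactly the standard reason the statement is regarded as well-known, and citing \cite[\S 62]{EKM} for multiplicativity of realization under composition of correspondences (together with the implicit fact that the diagonal class realizes the identity) closes the ring-homomorphism part.
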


Let $H$ be a finite group which acts $R$-linearly on $V$ and preserves the bilinear form $\mathfrak{b}$.
One can naturally extend this action to $\Ch(X\times X) = V\otimes V$. On the other hand, by Prop.~\ref{corr-end}
$\Ch(X\times X) \simeq \End_{R}(V)$ and this induces an action of $H$ on $\End_{R}(V)$.
Using the fact that $\mathfrak{b}$ is $H$-invariant, one can check that this action coincides with the
natural action of $H$ on $\End_{R}(V)$ (given by $f\mapsto gf$ with $gf(x)=f(gx)$). In particular, we have the following statements:

\begin{cor}\label{cor48}
An element $\alpha \in \Ch(X\times X)$ is $H$-invariant if and only if the endomorphism $\alpha_* \in  \End_{R}(V)$ is $H$-invariant.
More generally, an element $\alpha \in \Ch(X\times Y)$ is $H$-invariant if and only if the homomorphism
$\alpha_*\colon\Ch(X)\to\Ch(Y)$ is $H$-invariant.
\end{cor}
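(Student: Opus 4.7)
The plan is to treat this corollary as a direct consequence of Proposition~\ref{corr-end} together with the paragraph immediately preceding the statement. For the first assertion, Prop.~\ref{corr-end} provides an $R$-algebra isomorphism $\Ch(X\times X)\simeq\End_R(V)$; the discussion above the corollary identifies the $H$-action on $V\otimes V$ with the natural action on $\End_R(V)$, using precisely that $\mathfrak{b}$ is $H$-invariant. Hence $H$-fixed points correspond to $H$-fixed points on the nose. Nothing beyond rewording is required.

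For the more general statement, the plan is to build an analogous $H$-equivariant isomorphism $\Ch(X\times Y)\simeq\Hom_R(V,W)$ with $W:=\Ch(Y)$, through which $\alpha$ corresponds to $\alpha_*$. Since the $R$-motives of $X$ and $Y$ are split, $V$ and $W$ are free $R$-modules of finite rank and the Künneth formula gives $\Ch(X\times Y)=V\otimes W$. Poincaré duality for a split motive says that $\mathfrak{b}=\mathfrak{b}_X$ is nondegenerate, so the contraction $V\otimes W\to\Hom_R(V,W)$ sending $x\otimes y$ to $v\mapsto \mathfrak{b}(v,x)\,y$ is an isomorphism of $R$-modules. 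Unwinding the definition of $\alpha_*=(\pr_Y)_*\bigl(\alpha\cdot(\,\cdot\times 1)\bigr)$ on a pure tensor $e_i\times e_j^*$ (with $e_i,e_j^*$ dual basis elements of $V$ under $\mathfrak{b}$) shows that this contraction coincides with the realization map $\alpha\mapsto\alpha_*$, up to the standard dual-basis convention already used in Prop.~\ref{corr-end}.

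It remains to match $H$-actions. The action on $V\otimes W$ is diagonal; transporting it across the contraction produces the action $f\mapsto g_W\circ f\circ g_V^{-1}$ on $\Hom_R(V,W)$, which is exactly the argument sketched for $X=Y$ and which relies on $H$ preserving $\mathfrak{b}$. An element of $\Hom_R(V,W)$ is fixed by this action for every $g\in H$ iff it commutes with $H$, i.e., is $H$-invariant in the sense of the corollary. Combining this with the identification $\alpha\leftrightarrow\alpha_*$ gives the equivalence. The main obstacle I anticipate is purely bookkeeping: checking that the push-pull formula for $\alpha_*$ agrees with the contraction by $\mathfrak{b}$ on pure tensors, and that both sides carry the same diagonal/conjugation $H$-action. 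No new ingredient beyond the square case is needed, which is why a brief direct verification rather than an appeal to further machinery is appropriate.
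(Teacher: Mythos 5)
Your proof is correct and follows essentially the same approach as the paper, which presents the corollary as an immediate consequence of Prop.~\ref{corr-end} and the discussion in the paragraph before it; your elaboration of the $\Ch(X\times Y)\simeq\Hom_R(V,W)$ case via the nondegenerate pairing $\mathfrak{b}$ supplied by Poincar\'e duality is precisely the gap-filling that the paper leaves to the reader. One further point in your favour: your identification of the transported $H$-action as conjugation $f\mapsto g_W\circ f\circ g_V^{-1}$ (so that ``$H$-invariant'' means $H$-equivariant, i.e.\ commuting with the $H$-actions) is the reading under which the corollary is true, whereas the paper's parenthetical ``given by $f\mapsto gf$ with $gf(x)=f(gx)$'' does not literally describe the conjugation action that the diagonal action on $V\otimes V$ induces via the $\mathfrak{b}$-contraction.
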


\begin{cor}
\label{projector}
A projector $\pi \in  \End(M(X)\otimes R)$ is $H$-invariant if and only if $\, \Im \alpha_* $ and $\Ker \alpha_*$ are $H$-invariant.
\end{cor}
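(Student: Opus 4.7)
The plan is to reduce the statement to a linear-algebra fact about a projector on the $R$-module $V=\Ch(X)$ via the preceding two results. By Proposition~\ref{corr-end} the map $\alpha\mapsto\alpha_*$ is an isomorphism of $R$-algebras $\Ch(X\times X)\simeq\End_R(V)$, so a projector $\pi$ corresponds to a projector $\pi_*\in\End_R(V)$ and we have the $R$-module decomposition $V=\Im\pi_*\oplus\Ker\pi_*$ with $\pi_*$ acting as the identity on the first summand and as zero on the second. By Corollary~\ref{cor48}, the $H$-invariance of $\pi$ is equivalent to the $H$-invariance of $\pi_*$ in $\End_R(V)$, i.e.\ to the statement that $\pi_*\colon V\to V$ is $H$-equivariant. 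Thus the whole claim becomes: \emph{a projector $\pi_*$ on the $H$-module $V$ is $H$-equivariant iff its image and its kernel are $H$-submodules of $V$.}

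For the forward direction I would argue directly: if $\pi_*$ commutes with the $H$-action, then $g\cdot\Im\pi_*=g\pi_*(V)=\pi_*(gV)=\pi_*(V)=\Im\pi_*$, and for $v\in\Ker\pi_*$ one has $\pi_*(gv)=g\pi_*(v)=0$, so $g\cdot\Ker\pi_*\subseteq\Ker\pi_*$; equality follows since $g$ is invertible. For the reverse direction I would use the direct-sum decomposition: given $v\in V$, write $v=v_1+v_2$ with $v_1\in\Im\pi_*$ and $v_2\in\Ker\pi_*$, so that $\pi_*(v)=v_1$. If both summands are $H$-stable, then $gv_1\in\Im\pi_*$ and $gv_2\in\Ker\pi_*$, hence $\pi_*(gv)=gv_1=g\pi_*(v)$, proving $H$-equivariance.

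There is essentially no genuine obstacle here: once Corollary~\ref{cor48} supplies the translation between cycle-level $H$-invariance and endomorphism-level $H$-equivariance, the remaining content is the standard fact that a projector is equivariant for a group action on a module precisely when the two factors of the induced direct-sum decomposition are invariant. The only small point to keep in mind is that Proposition~\ref{corr-end} is an isomorphism of $R$-algebras, which is what guarantees that $\pi^2=\pi$ passes to $\pi_*^2=\pi_*$ so that the direct-sum decomposition $V=\Im\pi_*\oplus\Ker\pi_*$ is available.
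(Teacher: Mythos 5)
Your proof is correct and supplies exactly the elementary linear-algebra verification that the paper leaves implicit (the corollary is stated with no separate proof, as an immediate consequence of Corollary~\ref{cor48}). Two small remarks: the statement contains a typo --- $\alpha_*$ should read $\pi_*$ --- and the $H$-action on $\End_R(V)$ that makes Corollary~\ref{cor48} work is the conjugation action, so that ``$H$-invariant endomorphism'' means ``$H$-equivariant map,'' which is precisely how you interpret it and what your direct-sum argument uses.
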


In general, one cannot replace $\Z_p$ by $\Z_{(p)}$ in the statement of Theorem~\ref{padicprop}
as an example of Esther Beneish shows. Namely, by \cite{Ben} there exists an indecomposable invertible $\Z_{(p)}$-module which is decomposable after
passing to $\Z_p$, hence, using \cite{Ch-M} one can construct a twisted flag variety with a $p$-admissible decomposition
which is not $\Z_{(p)}$-admissible.

Note that the same example of Beneish shows that for motives of twisted flag varieties the Krull-Schmidt principle does not hold with $\Z_{(p)}$-coefficients
(see \cite{Ch-M}). Nevertheless, one can show the following statement.

\begin{prop}\label{maranda}
Let $p$ be a prime number, $M_1$ and $M_2$ be two direct summands of the motives of two nilsplit varieties over
a field $F$ with $\Z_{(p)}$-coefficients, and let $E/F$ be their common finite Galois splitting field.

If $M_1\otimes\Z/p\Z\simeq M_2\otimes\Z/p\Z$, then $M_1\simeq M_2$.
\end{prop}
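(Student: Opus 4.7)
The plan is to lift the mod-$p$ isomorphism together with its inverse to $F$-rational morphisms over $\Z_{(p)}$ and then show that the two composites differ from the identities by units. Fix $\bar\phi\colon M_1\otimes\Z/p\Z\xrightarrow{\sim}M_2\otimes\Z/p\Z$ with inverse $\bar\psi$. Because the reduction $\Z_{(p)}\twoheadrightarrow\Z/p\Z$ induces a surjection on Chow groups compatible with restriction to the common Galois splitting field $E$, every $F$-rational mod-$p$ correspondence admits an $F$-rational lift to $\Z_{(p)}$-coefficients; lifting $\bar\phi$ and $\bar\psi$ this way, I obtain $f\in\Hom(M_1,M_2)$ and $g\in\Hom(M_2,M_1)$ with $gf=\id_{M_1}+pA$ and $fg=\id_{M_2}+pB$ for some $A\in\End(M_1)$ and $B\in\End(M_2)$. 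It then suffices to prove that $\id_{M_1}+pA$ and $\id_{M_2}+pB$ are units.

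The hard part will be the invertibility over the non-complete ring $\Z_{(p)}$: the naive Hensel/geometric-series argument is unavailable. My strategy is to combine the Rost nilpotence principle with the geometric splittedness. By Rost nilpotence the kernel $K$ of $\End(M_1)\to\End((M_1)_E)$ is a nil ideal; since $(M_1)_E$ is a finite sum of Tate motives, $\End((M_1)_E)$ is a finitely generated free $\Z_{(p)}$-module, and so is its subring $\End(M_1)/K$. In any $\Z_{(p)}$-algebra finite as a $\Z_{(p)}$-module, $p$ lies in every maximal ideal: the residue field is a finitely generated $\Z_{(p)}$-module, hence cannot have characteristic zero (else it would contain $\Q$, which is not f.g.\ over $\Z_{(p)}$), nor can it have a prime characteristic $q\neq p$ (else $q$, a unit of $\Z_{(p)}$, would sit in a maximal ideal). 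Therefore $p$ belongs to the Jacobson radical, so $\id+p(\End(M_1)/K)\subset(\End(M_1)/K)^\ast$, and units lift through the nil ideal $K$ automatically. Hence $\id_{M_1}+pA$ is a unit in $\End(M_1)$, and the same reasoning treats $\id_{M_2}+pB$.

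Setting $U=(\id_{M_1}+pA)^{-1}$, the morphism $Ug$ is then a left inverse of $f$; the symmetric construction provides a right inverse, so $f$ is the desired isomorphism. The key conceptual step is the replacement of $p$-adic completeness of $\Z_p$ by the combination of Rost nilpotence (discarding the potentially large nil part of $\End(M_1)$) and the finiteness of $\End((M_1)_E)$ over $\Z_{(p)}$ (which places $p$ in the Jacobson radical of the remainder).
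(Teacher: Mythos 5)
Your proof is correct in its essential approach, and it takes a genuinely different route from the paper's. The paper's argument translates the problem into integral representation theory: via Prop.~\ref{corr-end} and Cor.~\ref{cor48} it passes to lattices over the group ring $\Z_{(p)}[\Gal(E/F)]$, notes that the mod-$p$ isomorphism propagates to $\Z/p^l\Z$ for all $l$, and then invokes the Maranda theorem \cite[Theorem~30.14]{CuRe} to lift to $\Z_{(p)}$, after which it descends to $F$ via Lemma~\ref{haution} and Rost nilpotence. You instead lift $\bar\phi,\bar\psi$ directly to $F$-rational $\Z_{(p)}$-morphisms $f,g$, reduce to showing that $\id+pA$ and $\id+pB$ are units, and establish this by a Jacobson-radical argument using module-finiteness of $\End((M_1)_E)$ over $\Z_{(p)}$ together with Rost nilpotence. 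This is more elementary, avoids the appeal to representation theory entirely, and as written does not actually use the Galois structure of $E/F$. On the other hand the paper's route is what exhibits the motives-vs.-representations dictionary that this section is built around, and it gives for free the $\Z/p^l\Z$-step that is re-used elsewhere.

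One point needs tightening: the justification that $p\in J\bigl(\End(M_1)/K\bigr)$ is phrased in terms of residue fields of maximal ideals, which is a statement about commutative rings, whereas $\End(M_1)/K$ is in general noncommutative (it is a subring of a product of matrix rings $\prod M_{d_i}(\Z_{(p)})$). The conclusion is still correct, and the clean fix is the standard Nakayama argument: if $R$ is a module-finite algebra over the local ring $(\Z_{(p)},p)$ and $S$ is a simple left $R$-module, then $S$ is cyclic over $R$, hence finitely generated over $\Z_{(p)}$, and $pS$ is an $R$-submodule since $p$ is central; Nakayama gives $pS\neq S$, so $pS=0$. Thus $p$ annihilates every simple module, i.e.\ $p\in J(R)$, and units lift through the nil ideal $K$ as you say. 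With this adjustment the argument is complete.
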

\begin{proof}
Let $H$ denote the Galois group of $E/F$ acting on $\CH(X_E)$, and $W_1$ and $W_2$
be the realizations of the motives $(M_1)_E$ and $(M_2)_E$ resp.

First note that if $M_1\otimes\Z/p\Z\simeq M_2\otimes\Z/p\Z$, then $M_1\otimes\Z/p^l\Z\simeq M_2\otimes\Z/p^l\Z$
for all $l>1$. Indeed, if $\alpha\colon M_1\to M_2$ and $\beta\colon M_2\to M_1$ are any liftings
of the mutually inverse isomorphisms modulo $p$, then $\alpha \mod p^l$ and $\beta(\alpha\beta)^{p^{l-1}-1} \mod p^l$
are mutually inverse isomorphisms modulo $p^l$.

Applying Cor.~\ref{cor48} we obtain an isomorphism of $(\Z/p^l\Z)[H]$-modules
$W_1\otimes\Z/p^l\Z$ and $W_2\otimes\Z/p^l\Z$. If $l$ is sufficiently large, by the Maranda theorem
\cite[Theorem~30.14]{CuRe} we can lift this isomorphism to an isomorphism of $\Z_{(p)}[H]$-modules $W_1$ and $W_2$.

Applying now Prop.~\ref{corr-end} and Cor.~\ref{cor48} in the opposite direction we obtain an isomorphism between $(M_1)_E$ and $(M_2)_E$.
By construction and by Lemma~\ref{haution} this isomorphism is rational. It remains to apply the Rost
nilpotence principle.
\end{proof}

Finally, using our method we can show the following proposition. In classical terms it follows from the
Conlon theorem.

\begin{prop}\label{conlon}
Let $H$ be a finite group and let $N_1$ and $N_2$ be two invertible $\Z_{(p)}[H]$-modules.
If $N_1\otimes \Z/p\Z\simeq N_2\otimes \Z/p\Z$, then $N_1\simeq N_2$.
\end{prop}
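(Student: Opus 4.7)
The plan is to translate the question into the motivic framework via the correspondence of Prop.~\ref{corr-end} and Cor.~\ref{cor48} and then invoke Prop.~\ref{maranda}. The strategy mirrors the end of Section~\ref{secmaranda}: we use motivic results to prove a representation-theoretic statement.

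First I would produce a geometric realization of the data. Choose any field $F$ admitting a finite Galois extension $E/F$ with $\Gal(E/F)\simeq H$ (such an $F$ always exists: take $E$ to be any field on which $H$ acts faithfully and $F=E^H$). Then construct a nilsplit variety $X$ over $F$ splitting over $E$, together with direct summands $M_1$ and $M_2$ of the motive $M(X)\otimes\Z_{(p)}$ whose $E$-realizations $\Ch((M_i)_E)$ are isomorphic to $N_i$ as $\Z_{(p)}[H]$-modules for $i=1,2$. The hypothesis that each $N_i$ is invertible is what makes this realization available: in the Chernousov--Merkurjev picture recalled in the paragraph just before Prop.~\ref{maranda}, invertible $\Z_{(p)}[H]$-lattices are precisely those arising as Chow realizations of motivic direct summands of (products of) twisted forms of projective spaces, which are nilsplit of inner type.

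With the realization in hand, the hypothesis $N_1\otimes\Z/p\Z\simeq N_2\otimes\Z/p\Z$ of $\Z/p\Z[H]$-modules translates, via Cor.~\ref{cor48} and Prop.~\ref{corr-end} read in reverse, into an isomorphism $M_1\otimes\Z/p\Z\simeq M_2\otimes\Z/p\Z$ of motives over $F$ with $\Z/p\Z$-coefficients. Prop.~\ref{maranda} then upgrades this to an isomorphism $M_1\simeq M_2$ of $\Z_{(p)}$-motives over $F$. Taking $E$-realizations of the resulting isomorphism and reading Cor.~\ref{cor48} and Prop.~\ref{corr-end} in the forward direction yields the desired isomorphism $N_1\simeq N_2$ of $\Z_{(p)}[H]$-modules.

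The main obstacle is the geometric realization step: producing the nilsplit variety $X$ and the direct summands $M_1,M_2$ whose Chow realizations match the prescribed invertible modules $N_1$ and $N_2$. This is the only place in the argument where the invertibility hypothesis enters in an essential way; without it, the realization is unavailable, as Beneish's example (cited just above Prop.~\ref{maranda}) already exhibits the kind of pathologies that can occur for general $\Z_{(p)}[H]$-lattices. Once the realization is in place, the rest of the proof is a formal application of the dictionary between correspondences and $H$-equivariant endomorphisms combined with the already-proved Prop.~\ref{maranda}.
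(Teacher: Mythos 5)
Your proposal reproduces the paper's own ``geometric proof'': embed the invertible $\Z_{(p)}[H]$-modules into the category of Chow motives of twisted flag varieties via Chernousov--Merkurjev, translate the modular isomorphism into a motivic one through Prop.~\ref{corr-end} and Cor.~\ref{cor48}, and then apply Prop.~\ref{maranda}. The paper additionally notes that the statement follows directly from \cite[Prop.~81.17 and Prop.~30.17]{CuRe}, but your argument matches the geometric alternative given there.
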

\begin{proof}
This proposition follows from \cite[Prop.~81.17 and Prop.~30.17]{CuRe}.

On the other hand, there is the following geometric proof of this statement. Namely, by
\cite{Ch-M} the category of invertible modules can be embedded in the category of
Chow motives of twisted flag varieties over some field.

Then the statement follows from Prop.~\ref{maranda} (Maranda theorem).
\end{proof}

\section{Relative Krull-Schmidt principle}\label{seckrull}

The following definition was suggested by Charles De Clercq.

\begin{dfn}\label{defkrull}
Let $M(X)$ be the integral motive of a smooth projective variety $X$ over a field $F$.  We say that two complete decompositions
\begin{equation}\label{form2}
M(X) \simeq M_1 \oplus \ldots \oplus M_k \simeq N_1 \oplus \ldots\oplus N_l \, ,
\end{equation}
are \textit{relatively equivalent}, if $k=l$ and there exists a permutation $\sigma$ of $\{1, \ldots, k\} $ such that
for any $i \in \{1, \ldots, k\}$, we have $(M_i)_{\bar{F}} \simeq (N_{\sigma(i)})_{\bar{F}}$.
We say that $M(X)$ satisfies the \textit{relative Krull-Schmidt principle} if all complete decompositions of $M(X)$
are relatively equivalent.
\end{dfn}

The following proposition shows
that the relative Krull-Schmidt principle holds for a wide class of projective homogeneous varieties.

\begin{prop} Let $X$ be a twisted flag variety of inner type admitting a splitting field of $p$-primary degree for some prime number $p$. Then the relative Krull-Schmidt principle holds for the integral motive of $X$.
\end{prop}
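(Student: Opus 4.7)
The plan is to pass to $\Z_p$-coefficients, use the hypothesis that $[E:F]$ is $p$-primary to show that each integrally indecomposable summand remains indecomposable over $\Z_p$, invoke the Krull-Schmidt principle for $\Z_p$-motives of twisted flag varieties, and then extend scalars to $\bar{F}$ to conclude. Concretely, let
\[
M(X) = M_1 \oplus \cdots \oplus M_k = N_1 \oplus \cdots \oplus N_l
\]
be two complete integral decompositions, and let $E/F$ be the given Galois splitting field of degree $p^a$.

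The key step is to verify that each $M_i \otimes \Z_p$ is indecomposable in the $\Z_p$-motive category, and similarly for each $N_j \otimes \Z_p$. Suppose toward a contradiction that $M_i \otimes \Z_p = A \oplus B$ with $A, B \neq 0$. Over $E$ both $A_E$ and $B_E$ split as sums of $\Z_p$-Tate motives, and matching these against the integral Tate decomposition of $(M_i)_E$ produces an integral decomposition $(M_i)_E \simeq \widetilde{A} \oplus \widetilde{B}$ with $\widetilde{A} \otimes \Z_p \simeq A_E$ and $\widetilde{B} \otimes \Z_p \simeq B_E$. Its mod-$p$ reduction is realized over $F$ by the non-trivial decomposition $M_i \otimes \Z/p\Z \simeq (A \otimes \Z/p\Z) \oplus (B \otimes \Z/p\Z)$, so the integral decomposition of $(M_i)_E$ is $p$-admissible. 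Since the only prime divisor of $[E:F]$ is $p$, Theorem~\ref{theorem1} upgrades $p$-admissibility to $\Z$-admissibility, yielding a non-trivial integral decomposition of $M_i$, contradicting indecomposability.

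Thus $M(X) \otimes \Z_p = \bigoplus_i M_i \otimes \Z_p = \bigoplus_j N_j \otimes \Z_p$ are two complete decompositions in the $\Z_p$-motive category. The Krull-Schmidt principle with $\Z_p$-coefficients (\cite{Ch-M}) then gives $k = l$ and a permutation $\sigma$ of $[1,k]$ with $M_i \otimes \Z_p \simeq N_{\sigma(i)} \otimes \Z_p$. Extending scalars to $\bar{F}$, both $(M_i)_{\bar{F}}$ and $(N_{\sigma(i)})_{\bar{F}}$ are direct sums of integral Tate motives (since $X_{\bar{F}}$ is split), and the $\Z_p$-isomorphism forces the multiplicities of each Tate twist to agree; hence $(M_i)_{\bar{F}} \simeq (N_{\sigma(i)})_{\bar{F}}$ as integral motives, as required. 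The principal obstacle is the indecomposability step: the $p$-primary hypothesis is essential because it collapses the list of admissibility conditions in Theorem~\ref{theorem1} to a single prime, without which integrally indecomposable summands could in principle split over $\Z_p$ and the argument would fail.
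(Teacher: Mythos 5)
Your argument is correct and parallels the paper's strategy closely: show that indecomposable integral summands stay indecomposable over a suitable local coefficient ring via Theorem~\ref{theorem1} and the $p$-primary hypothesis, apply Krull-Schmidt over that ring, and then observe that geometric decompositions over that ring determine the geometric integral decompositions. The only substantive difference is that you route through $\Z_p$-coefficients where the paper uses $\Z/p\Z$-coefficients, and this makes the paper's proof slightly more economical at two points. First, the indecomposability step becomes a direct application of Theorem~\ref{theorem1}: if $M_i \otimes \Z/p\Z$ split nontrivially, the resulting decomposition of $(M_i)_E$ would be $p$-admissible and hence $\Z$-admissible, contradicting indecomposability of $M_i$; your version needs the extra detour of first reducing a hypothetical $\Z_p$-splitting modulo $p$ before Theorem~\ref{theorem1} can be invoked. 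Second, Krull-Schmidt for geometrically split $\Z/p\Z$-motives is elementary, since the endomorphism rings are finite (so idempotents lift modulo the radical and indecomposable objects have local endomorphism rings), whereas the $\Z_p$-statement you cite from \cite{Ch-M} is, as quoted in the introduction, formulated for twisted flag varieties of simple groups; the proposition allows arbitrary semisimple groups of inner type, so one should verify the cited result covers that generality. The final step, passing to $\bar F$ and comparing multiplicities of Tate twists, is the same in both arguments.
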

\begin{proof}
Let $M(X)$ be the integral motive of $X$. Consider two complete integral motivic decomposition of $M(X)$ as
in formula~\ref{form2} and the reductions of these motivic decompositions modulo $p$.

By Theorem~\ref{theorem1}, since $X$ has a splitting field of a $p$-primary degree, any indecomposable
summand of $M(X)$ remains indecomposable modulo $p$. Therefore after reduction modulo $p$ any complete decomposition of $M(X)$
remains complete.

By the Krull-Schmidt principle for $\Z/p\Z$-coefficients the motive $M(X) \otimes \Z/ p\Z$ has a unique complete
decomposition (up to isomorphism and permutation of the factors).

Finally, for any two geometrically split integral motives $M$ and $N$ over $F$ we obviously have
$$ M_{\bar{F}} \otimes \Z/ p\Z \simeq N_{\bar{F}} \otimes \Z/ p\Z  \quad  \Longrightarrow  \quad  M_{\bar{F}} \simeq N_{\bar{F}} \, .$$
Therefore it follows that any two complete decompositions of the integral motive of $X$ are relatively equivalent.
\end{proof}

Now using Theorem~\ref{theorem1} we provide an example of a twisted flag variety for which the relative Krull-Schmidt
principle fails.

Let $G$ be a simple algebraic group of type $\mathrm{F}_4$ over $F$ and $X$ the projective $G$-homogeneous
variety of maximal parabolic subgroups of $G$ of type $1$ (the enumeration of simple roots follows Bourbaki).
The torsion primes for $G$ are $2$ and $3$ and any group $G$ of type $\mathrm{F}_4$ has a splitting field $E$ of degree $6$.
Assume that our group $G$ does not have splitting fields of degree $2$ and $3$. Then the motivic
decompositions of $X$ modulo $2$ and modulo $3$ are known (see \cite{NSZ} and \cite[Section~7]{PSZ}).
Namely, we have over $F$
$$M(X)\otimes\Z/3\Z=\bigoplus_{i=0}^7R_3(i)\text{ and }
M(X)\otimes\Z/2\Z=\oplus_{i\in\{0,1,2,4,5,7,8,10,11,12\}}R_2(i)\bigoplus R_2^{\oplus 2}(6),$$
where $R_2$ and $R_3$ are indecomposable motives and $(R_2)_E\simeq\Z/2\Z\oplus\Z/2\Z(3)$
and $(R_3)_E\simeq\Z/3\Z\oplus\Z/3\Z(4)\oplus\Z/3\Z(8)$. We remark that the motives
$R_2$ and $R_3$ are the {\it Rost motives} corresponding to the cohomological
invariants $f_3$ and $g_3$ resp. (see \cite[\S40]{Inv}).

We represent the Tate motives in a motivic decomposition over $E$ graphically as {\it boxes}. We draw them from left to right according to
their shifts. For example,
\ytableausetup{smalltableaux}
\begin{ytableau}
\empty &  \empty\\
\none & \empty
\end{ytableau}\,
means $\Lambda\oplus\Lambda(1)^{\oplus 2}$, where $\Lambda$ is the coefficient ring.

We put letters in the boxes to collect Tate motives belonging to the same indecomposable motive over the
base field $F$. For example,
\ytableausetup{smalltableaux}
\begin{ytableau}
\text{A} & \text{B}\\
\none & \text{B}
\end{ytableau}\,
means that over $F$ the decomposition is $M_1\oplus M_2$ with $M_1$ and $M_2$ indecomposable and with
$(M_1)_E\simeq\Lambda$ and $(M_2)_E\simeq\Lambda(1)^{\oplus 2}$.

Drawing the decompositions of $M(X)$ with $\Z/2\Z$-, $\Z/3\Z$-coefficients and (applying Theorem~\ref{theorem1})
with $\Z$-coefficients we get:

$$\begin{tabular}{cc}

{\tiny mod 3}&{\tiny mod 3}\\
\ytableausetup{smalltableaux}
\begin{ytableau}
*(red) 1&  2 & 3&*(red)4&5&6&7&8&*(red)1&2&3&*(red)4&5&6&7&8\\
\none & \none &\none &\none &*(red)1 &2&3&*(red)4&5&6&7&8
\end{ytableau}
&
\ytableausetup{smalltableaux}
\begin{ytableau}
*(yellow) 1& *(yellow)2 & *(yellow)3&*(yellow)4&5&6&7&8&*(yellow)1&*(yellow)2&*(yellow)3&*(yellow)4&5&6&7&8\\
\none & \none &\none &\none &*(yellow)1 &*(yellow)2&*(yellow)3&*(yellow)4&5&6&7&8
\end{ytableau}
\\
{\tiny mod 2} & {\tiny mod 2}\\
\ytableausetup{smalltableaux}
\begin{ytableau}
*(red) 1&2&3&*(red)1&2&5&6&8&*(red)9&7&8&*(red)9&\text{C}&\text{A}&\text{B}&\text{C}\\
\none & \none &\none &\none & *(red) 4&3&7&*(red)4&5&6&\text{A}&\text{B}
\end{ytableau}
&
\ytableausetup{smalltableaux}
\begin{ytableau}
*(yellow) 1&*(yellow)2&*(yellow)3&*(yellow)1&*(yellow)2&5&6&*(yellow)8&*(yellow)9&*(yellow)7&*(yellow)8&*(yellow)9&\text{C}&\text{A}&\text{B}&\text{C}\\
\none & \none &\none &\none & 4&*(yellow)3&*(yellow)7&4&5&6&\text{A}&\text{B}
\end{ytableau}
\\
{\tiny integral} & {\tiny integral}\\
\ytableausetup{smalltableaux}
\begin{ytableau}
*(red) &&&*(red)&&&&&*(red)&&&*(red)&&&&\\
\none & \none &\none &\none & *(red) &&&*(red)&&&&
\end{ytableau}
&
\ytableausetup{smalltableaux}
\begin{ytableau}
*(yellow) &*(yellow)&*(yellow)&*(yellow)&&&&&*(yellow)&*(yellow)&*(yellow)&*(yellow)&&&&\\
\none & \none &\none &\none &*(yellow) &*(yellow)&*(yellow)&*(yellow)&&&&
\end{ytableau}
\end{tabular}
$$

The colored boxes represent the Tate motives we combine together to lift decompositions modulo torsion primes to
integral decompositions. Thus, combining the Tate motives in two different ways as shown in these pictures, we get two integral decompositions of $M(X)$, which are not relatively
equivalent.

We can write the above pictures as formulas. Namely, we constructed two integral indecomposable
(by Corollary~\ref{indecomposable}) direct summands $L_1$ (red) and $L_2$ (yellow) of $M(X)$ such that

\begin{tabular}{l|l}
$(L_1)_E\simeq\bigoplus_{i\in\{0,3,4,7,8,11\}}\Z(i)$&
$(L_2)_E\simeq\bigoplus_{i=0}^{11}\Z(i)$\\
$L_1\otimes\Z/3\Z\simeq R_3\oplus R_3(3)$&
$L_2\otimes\Z/3\Z\simeq\bigoplus_{i=0}^3 R_3(i)$\\
$L_1\otimes\Z/2\Z\simeq R_2\oplus R_2(4)\oplus R_2(8)$&
$L_2\otimes\Z/2\Z\simeq \oplus_{i=0}^2 R_2(i)\bigoplus\oplus_{i=6}^8 R_2(i)$.
\end{tabular}

\section{Motivic decomposability of generalized Severi-Brauer varieties}
Let $F$ be a field and let $A$ be a central simple $F$-algebra of degree $n$. We
write $\SB(k,A)$ for the generalized Severi-Brauer variety of
right ideals in $A$ of reduced dimension $k$ for
$k=1,\dots,n$. In particular, $\SB(n,A)=\Spec F$ and $\SB(1,A)$ is the classical Severi-Brauer variety of $A$. The generalized Severi-Brauer varieties are twisted forms of Grassmannians (see \cite[\S I.1.C]{Inv}).

In this section we study integral motivic decomposability of generalized Severi-Brauer varieties.
Since $\SB(k,A) \simeq \SB(n -k, A^{\mathrm{op}})$, the general case can be reduced to the case $1 \leq k \leq (\deg A)/2$.

\begin{thm} Let $A$ be a central simple $F$-algebra and let $k$ be an integer such that $1 \leq k\leq (\deg A)/2$.
The integral motive of the generalized Severi-Brauer variety $\SB(k,A)$ is decomposable except the following two cases:
\begin{enumerate}
  \item [1)] $k=1$ and $A$ is a division algebra (classical Severi-Brauer variety);
  \item [2)] $k=2$ and $A$ is a division algebra with $2$-primary index.
\end{enumerate}

\end{thm}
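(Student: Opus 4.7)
The theorem has two directions. For cases (1) and (2), indecomposability is known: Karpenko proved (1) in \cite{K-i}, and (2) is shown in \cite{upper}. So the task is to prove decomposability of $M(\SB(k,A))$ in every remaining case, namely: $A$ not a division algebra; or $A$ a division algebra with $k = 2$ and $\ind A$ not $2$-primary; or $A$ a division algebra with $k \geq 3$.

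My strategy is to apply Corollary~\ref{indecomposable}: since $\SB(k,A)$ is a twisted flag variety of inner type $A_{n-1}$ with $n = \deg A$, the relevant torsion primes are the prime divisors of $n$. To decompose $M(\SB(k,A))$ it suffices to exhibit a non-trivial decomposition of the split motive $M(\SB(k,A)_E)$ that is $p$-admissible for every prime $p \mid n$. For primes $p \nmid \ind A$ the split motive modulo $p$ is a sum of Tate motives and $p$-admissibility is automatic, so only the primes $p \mid \ind A$ are constrained.

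If $A$ is not a division algebra, write $A = M_s(D)$ with $s \geq 2$; existing structural results on generalized Severi-Brauer varieties (\cite{upper}, \cite{CPSZ}, \cite{Zh}, \cite{Zh2}, \cite{Chls}) provide an explicit non-trivial direct summand of $M(\SB(k, A))$, typically a shifted copy of the motive of $\SB(k', D)$ for a smaller $k'$. If $A$ is a division algebra with $k = 2$ and $\ind A$ has an odd prime divisor $p$, I would exhibit a non-trivial rational direct summand of the split motive of $\SB(2, A)$ modulo $p$ (a Rost-type summand visible in $\CH(\mathrm{Gr}(2,n)_E) \otimes \Z/p\Z$ since $k = 2 < p$), and lift it to an integral non-trivial direct summand via Theorem~\ref{theorem1}. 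If $A$ is a division algebra with $k \geq 3$, then for each prime $p \mid \ind A$ I would construct a non-trivial rational projector in $\CH(\mathrm{Gr}(k,n)_E) \otimes \Z/p\Z$ directly from the Schubert cycle calculus, and assemble these mod-$p$ decompositions via Theorem~\ref{theorem1} into an integral non-trivial decomposition of $M(\SB(k,A))$.

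The central obstacle is the construction of the non-trivial rational projector modulo $p$ in the last case, where $A$ is a division algebra of $p$-primary index and $k \geq 3$: one must identify, using index-reduction formulas together with the $p$-adic valuations of the binomial coefficients arising from the Pieri rule on $\mathrm{Gr}(k,n)$, a Schubert-cycle combination that is simultaneously $F$-rational and a non-trivial idempotent modulo $p$. The combinatorics here parallel the techniques of \cite{Chls} and \cite{Zh}, but the specific form of the projector depends delicately on the arithmetic of $k$, $n$, and $p$.
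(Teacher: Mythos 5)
Your high-level framework is right: reduce via Corollary~\ref{indecomposable} to exhibiting, for each torsion prime $p \mid \ind A$, a mod-$p$ decomposition of $M(\SB(k,A)_E)$ realizing the \emph{same} split decomposition. But your proposal does not actually produce such a decomposition, and the gap is precisely where the difficulty lives.

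Two concrete problems. First, Theorem~\ref{theorem1} demands a single decomposition of $M_E$ that is $p$-admissible for \emph{every} torsion prime simultaneously. When $\ind A$ has two or more prime divisors, producing a Rost-type summand mod one odd prime $p$ (as you propose for $k=2$) and then ``lifting via Theorem~\ref{theorem1}'' does not work: you still need $2$-admissibility of the very same split summand, and you say nothing about why the Rost-type piece would also be rational mod $2$. The whole content of the theorem in the composite-index case is solving this compatibility problem. Second, for $k \geq 3$ you propose to build mod-$p$ rational projectors ``directly from Schubert cycle calculus,'' and you yourself flag this as ``the central obstacle'' whose form ``depends delicately on the arithmetic.'' That is not a proof sketch; it is a restatement of the difficulty. (For the $p$-primary case this is unnecessary anyway, since mod-$p$ decomposability of $\SB(k,A)$ for $k>1$ is already completely determined in \cite{upper} and \cite{Zh2}; you should simply cite that.)

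The paper's proof handles the composite-index division case by a different and much cleverer device that you are missing. After reducing to $A$ division with $n=\deg A = ml$, $2\le l<m$ coprime, write $A = A_m\otimes_F A_l$ with $\ind A_l = l$, $\ind A_m = m$, and set $Y_l=\SB(1,A_l)$, $Y_m=\SB(1,A_m)$. The single candidate summand is $N = M(Y_l\times Y_m)(d)$ with $d=n-l-k+1$. One then checks $p$-admissibility of $N_E$ for each $p\mid n$: if $p\mid l$, then mod $p$ one has $M(X)\simeq M(\SB(k,\mathrm{M}_m(A_l)))$, and \cite[Cor.~10.19]{Karpenko-thesis} plus \cite[Prop.~4.3]{I-K} give a summand built from consecutive shifts of $M(Y_l)\otimes\Z/p\Z$ large enough (via the elementary inequality $\dim Z_l\ge\dim Y_m$) to contain $N_E$; the case $p\mid m$ is symmetric. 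Because $N$ is a fixed summand whose split form is checked against each prime, the compatibility required by Theorem~\ref{theorem1} is automatic. This choice of $N$, derived from the coprime tensor factorization of $A$, is the essential idea absent from your proposal, and without it (or a genuine substitute) the argument does not close.
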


\begin{proof}
Let $n=\deg A$. By assumption $1 \leq k \leq n/2$.
If $A$ is not a division algebra, then by \cite[Corollary 10.19]{Karpenko-thesis}
the integral motive of $\SB(k,A)$ is decomposable for any $k$. From now on we assume that $A$ is division.

By \cite[Theorem~2.2.1]{K-i} the integral motive of the Severi-Brauer variety $\SB(1,A)$
is indecomposable and so we assume $k>1$.

Let us mention another already studied case: $\deg A$ is $p$-primary for
some prime number $p$. By Corollary~\ref{indecomposable}, the integral motivic
decomposability in this case is equivalent to the motivic decomposability
modulo $p$, which was completely studied in \cite{upper} and \cite{Zh2}.
Namely, for a $p$-primary $A$ the motive of $\SB(k,A)$ is decomposable if and only if $p=2$ and $k=2$.

We can now assume that $n=ml$, where $2 \leq l < m$ are coprime positive integers.
We have $A= A_m \otimes_F A_l$, where $A_l$ and $A_m$ are division algebras and
$\ind A_l =l$, $\ind A_m =m$. We take the following notation: $X=\SB(k,A)$, $Y_l= \SB(1, A_l)$, $Y_m= \SB(1, A_m)$. We denote by $N$ the integral motive $M(Y_l\times Y_m)(d)$, where $d= n-l-k+1$. Let $E$ be a splitting field extension of $F$ for the variety $X$.
By Corollary \ref{indecomposable}, in order to prove our theorem, it suffices to show that for any prime $p$ dividing $n$ the motive $N_E$ is $p$-admissible for $M(X_E)$.

For every prime $p$ dividing $n$ we have
$$
\begin{array}{cl}
M(X)\otimes \Z/p\Z \simeq M(\SB(k, \mathrm{M}_m(A_l))) \otimes \Z/p\Z & \mbox{ if }  p|\,l \\
M(X)\otimes \Z/p\Z \simeq M(\SB(k, \mathrm{M}_l(A_m))) \otimes \Z/p\Z & \mbox{ if }  p|\,m \, .
\end{array}
$$

By \cite[Corollary 10.19]{Karpenko-thesis} (applied to $\SB(k, \mathrm{M}_m(A_l))$ and resp. to $\SB(k, \mathrm{M}_l(A_m))$) we obtain the following motivic summand of $X$ modulo $p$:

$$
\begin{array}{clll}
M(Y_l\times Z_l)(d) & \mbox{ if }  p|\,l\, , & \mbox{where} \,\,\,  Z_l= \SB(k-1, \mathrm{M}_{m-1}(A_l))\,;\\
M(Y_m\times Z_m) (d-m+l) & \mbox{ if }  p|\,m \, , & \mbox{where} \,\, Z_m= \SB(k-1, \mathrm{M}_{l-1}(A_m))\,.
\end{array}
$$
By \cite[Proposition 4.3]{I-K} this summand in the case $p|\,l$ (resp. $p|\,m$) decomposes into a sum of consecutive shifts (without blanks) of $M(Y_l)\otimes \Z/p\Z$ (resp. of $M(Y_m)\otimes \Z/p\Z$).
Therefore in order to prove that for every prime $p$ dividing $n$ the motive $N_E$ (recall that $N =M(Y_l\times Y_m)(d)$) is $p$-admissible for $M(X_E)$ it suffices to check the following elementary inequalities:
$$\dim Z_l \geq \dim Y_m \quad \mbox{and} \quad \dim Z_m -m+l \geq \dim Y_l \, . $$
\noindent We use formulas $\dim Y_l = l-1$, $\dim Y_m = m-1$, $\dim Z_l = (k-1)(l(m-1)-k+1)$, $\dim Z_m = (k-1)(m(l-1)-k+1)$, and assumptions $2 \leq l < m$, $2 \leq k\leq n/2$.
\end{proof}

\end{document}